\documentclass[english]{smfart}

\usepackage{color}
\usepackage{amscd}
\usepackage{amsmath}
\usepackage{ascmac}
\usepackage{cases}
\usepackage{amssymb}
\usepackage{amsfonts}
\usepackage{smfthm}
\usepackage{bull}
\usepackage[all]{xy}
\usepackage{enumerate}
\usepackage[pdftex]{graphicx}
\usepackage[english]{babel}
\theoremstyle{plain}



\newcommand{\cA}{{\mathcal A}}

\newcommand{\cQ}{{\mathcal Q}}

\newcommand\Spec{\mathop{\rm Spec}\nolimits}

\newcommand{\res}{\mathop{\sf res}\nolimits}

\newcommand\lra{\longrightarrow}
\newcommand\ra{\rightarrow}

\title[A family of flat connections and algebraic Garnier solutions]{A family of flat connections on the projective space having dihedral monodromy and algebraic Garnier solutions}
\author{Arata Komyo}
\address{Department of Mathematics, Graduate School of Science, Osaka University, 
Toyonaka, Osaka 560-0043, JAPAN.}
\email{a-koumyou@cr.math.sci.osaka-u.ac.jp}

\begin{document}

\frontmatter

\begin{abstract}
A.\ Girand has constructed an explicit two-parameter family of flat connections 
over the complex projective plane $\mathbb{P}^2$. 
These connections have dihedral monodromy 
and their polar locus is a prescribed quintic composed of a conic and three tangent lines.
In this paper, we give a generalization of this construction.
That is, we construct an explicit $n$-parameter family of flat connections 
over the complex projective space $\mathbb{P}^n$.
Moreover, we discuss the relation between these connections and the Garnier system.
\end{abstract}

\begin{altabstract}
A.\ Girand a construit une famille explicite de connexions \`{a} deux param\`{e}tres
sur le plan projectif complexe $\mathbb{P}^2$.
Ces connexions ont une monodromie di\'{e}drale
et leur lieu polaire est une quintique prescrite compos\'{e}e 
d'une conique et de trois droites tangentes.
Dans cet article, nous donnons une g\'{e}n\'{e}ralisation de cette construction.
Autrement dit, nous construisons une famille de connexions plates explicite
\`a $n$ param\`etres
sur l'espace projectif complexe $\mathbb{P}^n$.
De plus, nous discutons de la relation entre ces connexions et le syst\`{e}me Garnier.
\end{altabstract}

\subjclass{14H05, 14F35, 34M55}
\keywords{algebraic function, Garnier system, isomonodromic deformation.}

\maketitle

\section{Introduction}

A \textit{meromorphic rank $2$ connection} $(E,\nabla)$ on a projective manifold $X$
is tha datum of a rank $2$ vector bundle $E$ equipped with a $\mathbb{C}$-linear 
morphism $\nabla \colon E \ra E \otimes \Omega^1_X (D)$ satisfying 
the Leibniz rule 
\begin{equation*}
\nabla (f \cdot s) = f \cdot \nabla(s) + df \otimes s 
\end{equation*} 
for any section $s$ and function $f$.
Here $D$ is the polar divisor of the connection $\nabla$.
The connection $\nabla$ is \textit{flat} when the curvature vanishes,
that is $\nabla \cdot \nabla=0$.
For a flat meromorphic rank $2$ connection, we can define its monodromy representation.
When $\det(E) = \mathcal{O}_X$ and the trace connection $\mathrm{tr} (\nabla)$
is the trivial connection on $\mathcal{O}_X$,
we say that $(E,\nabla)$ is an \textit{$\mathfrak{sl}_2$-connection}.
A connection $(E,\nabla)$ is called \textit{regular} 
if local $\nabla$-horizontal sections
have moderate growth near the polar divisor $D$
(for details, see \cite[Chap.\ II, Definition 4.2]{Deligne}).

In this paper, we introduce  
a family, parametrized by $\boldsymbol{\lambda} \in \mathbb{C}^n$, 
of meromorphic $\mathfrak{sl}_2$-connections 
$\nabla_{\boldsymbol{\lambda}} = d + A_{\boldsymbol{\lambda}}$ on the trivial bundle 
$\mathcal{O}_{\mathbb{P}^n}\oplus\mathcal{O}_{\mathbb{P}^n}$ 
over $\mathbb{P}^n$ with $n \ge 2$, 
with an explicit connection matrix $A_{\boldsymbol{\lambda}}$.

\subsection{The explicit expression of $\nabla_{\boldsymbol{\lambda}}$}\label{2019.7.10.22.19}
The explicit connection matrix $A_{\boldsymbol{\lambda}}$ is described as follows.
Let $[x:y:z_1:\ldots:z_{n-2}:t]$ be the homogeneous coordinates of $\mathbb{P}^n$.
Set $f(x,y,t):=x^2 + y^2 +t^2  -2 (x y +yt   +  t x)$.
For $\boldsymbol{\lambda}= (\lambda_0 , \ldots, \lambda_{n-1}) \in \mathbb{C}^n$,
we define rational $1$-forms on $\mathbb{P}^n$ as follows:
\begin{equation*}
\begin{aligned}
\alpha_0(x,y) &:= - 
\frac{(2 \lambda_0 + \lambda_1)dx - (2 \lambda_1 + \lambda_0)dy}{2} -
\frac{ \lambda_1 ( y-1)}{2}\frac{ dx}{x} +
\frac{ \lambda_0 ( x-1)}{2}\frac{ dy}{y}  ,\\
\alpha_1(x,y) &:= - \frac{1}{4}\frac{ d f(x,y,1)}{f(x,y,1)}, \quad
\alpha_2(x,y):= -\frac{\alpha_0(x,y)}{f(x,y,1)}, 
\end{aligned}
\end{equation*}
and
\begin{equation*}
\begin{aligned}
\alpha^i_0(x,y, z_i) &:=\lambda_{i+1} \left(  d z_i-
\frac{ z_id (f(x,y,1)-z_i^2) }{2(f(x,y,1)-z_i^2)} \right), \quad 
\alpha^i_2(x,y, z_i):= -\frac{\alpha^i_0(x,y, z_i)}{f(x,y,1)}  ,
\end{aligned}
\end{equation*}
which are described by the affine coordinates $[x:y:z_1:\ldots:z_{n-2}:1]$.
We define a connection matrix $A_{\boldsymbol{\lambda}}$ as 
\begin{equation*}
\begin{aligned}
A_{\boldsymbol{\lambda}}= 
\begin{pmatrix}
\cA_{11}  & \cA_{12}  \\
-\cA_{21}  &- \cA_{11} 
\end{pmatrix} + \sum_{i=1}^{n-2}
\begin{pmatrix}
\cA_{11}^i  & \cA_{12}^i  \\
-\cA_{21}^i &- \cA_{11}^i 
\end{pmatrix},
\end{aligned}
\end{equation*}
\textit{where}
\begin{equation*}
\begin{aligned}
\cA_{11} &:=
 (x-1)  \alpha_2 (x,y)  
 +  \alpha_1(x,y) +\frac{1}{2}\frac{d y}{y} ,\\
\cA_{11}^i &:=  (x-1)  \alpha^i_2 (x,y, z_i), \\
\cA_{12}  &:= \frac{dx    + (x-1)^2\alpha_2 (x,y)
 + 2 (x -1)\alpha_1(x,y)  + \alpha_0(x,y)  }{y} , \\
\cA_{12}^i &:=
 \frac{(x-1)^2\alpha^i_2 (x,y, z_i) + \alpha^{i}_0(x,y, z_i)  }{y},\\
\cA_{21}&:=  y  \alpha_2 (x,y),  \\
\cA_{21}^i &:= y \alpha^i_2 (x,y, z_i)  ,
\end{aligned}
\end{equation*}
in the affine coordinates $[x:y:z_1:\ldots:z_{n-2}:1]$.

\subsection{Main results}

Let $\mathcal{Q}_0$ and $\mathcal{Q}_i$ be the divisors on $\mathbb{P}^n$ defined by
$\mathcal{Q}_0:= ( f (x, y ,t) =0 )$ and
$\mathcal{Q}_i:= ( f (x, y ,t)  - z_i^2  =0 )$, $i=1,\ldots,n-2$,
respectively.
Let $D_n$ be the divisor on $\mathbb{P}^n$ defined by
\begin{equation*}
D_n := ( x=0 ) + ( y=0 ) + ( t=0 ) +
\mathcal{Q}_0 + \mathcal{Q}_1 + \cdots + \mathcal{Q}_{n-2}.
\end{equation*}
From the explicit expression of $\nabla_{\boldsymbol{\lambda}}$, it follows that 
all $\nabla_{\boldsymbol{\lambda}}$ share the same polar divisor $D_n$.
Note that the conic $\mathcal{Q}_0$ plays a special role: 
it is tangent to the conic $\mathcal{Q}_i$ inside the coordinate hyperplane $(z_i=0)$ 
for $i=1,\ldots,n-2$,
and it is tangent to the three coordinate hyperplanes $(x=0)$, $(y=0)$ and $(t=0)$.

\begin{theo}\label{2019.7.10.21.43}
\textit{
For each $\boldsymbol{\lambda}$, the connection $\nabla_{\boldsymbol{\lambda}}$
is flat and has at worst regular singularities. 
}
\end{theo}

We say that two connections $(E,\nabla)$ and $(E',\nabla')$ 
are \textit{birationally equivalent} when there is a birational bundle transformation 
$\phi \colon E \dashrightarrow E'$ that conjugates the two operators $\nabla$ 
and $\nabla'$.
We say that two connections $(E,\nabla)$ and $(E',\nabla')$ 
are \textit{projectively equivalent} if the induced $\mathbb{P}^1$-bundles 
coincide $\mathbb{P}(E)=\mathbb{P}(E')$, and if moreover $\nabla$ and $\nabla'$
induce the same projective connection $\mathbb{P}(\nabla)= \mathbb{P}(\nabla')$.

\begin{theo}\label{2019.7.10.15.33}
\textit{Via a generically finite Galois morphism 
$f \colon \mathbb{P}^n \rightarrow \mathbb{P}^n$, for each $\boldsymbol{\lambda}$,
the pull-back connection $f^*\nabla_{\boldsymbol{\lambda}}$ on the trivial bundle
is projectively birationally equivalent to a split connection of the form 
\begin{equation*}
d+ \begin{pmatrix}
\omega & 0 \\
0 &-\omega
\end{pmatrix}
\end{equation*}
with $\omega$ a rational closed $1$-form on $X$.
}
\end{theo}

In this case, the generically finite Galois morphism is a genetically finite morphism of degree two.
Loray, Pereira, and Touzet have proved
the structure theorem of flat meromorphic $\mathfrak{sl}_2$-connections
on projective manifolds in \cite{LPT}
(see also \cite{CS}).
By Theorem \ref{2019.7.10.15.33}, each $\nabla_{\boldsymbol{\lambda}}$ is the first type of 
the three possible types of flat meromorphic $\mathfrak{sl}_2$-connections 
over projective manifolds in the sense of Loray, Pereira, and Touzet \cite[Theorem E]{LPT}.

Since the connection $\nabla_{\boldsymbol{\lambda}}$ is flat for each $\boldsymbol{\lambda}$,
we can define its monodromy representation
$\pi_1 (\mathbb{P}^n \setminus D_n) 
\rightarrow 
\mathrm{SL}_2(\mathbb{C})$
of $\nabla_{\boldsymbol{\lambda}}$ for each $\boldsymbol{\lambda}$. 
Let $\boldsymbol{D}_{\infty}$ be the infinite dihedral group:
\begin{equation*}
\boldsymbol{D}_{\infty}:= \left\langle
\begin{pmatrix}
0 & \alpha \\
-\alpha^{-1} & 0
\end{pmatrix},
\begin{pmatrix}
\beta & 0 \\
0 & \beta^{-1}
\end{pmatrix}\ 
\middle| \ 
\alpha , \beta \in \mathbb{C}^*
 \right\rangle \le \mathrm{SL}_2 (\mathbb{C}).
\end{equation*}
For the monodromy representation of $\nabla_{\boldsymbol{\lambda}}$,
we have the following.

\begin{theo}\label{2019.7.10.22.03}
\textit{
For generic $\boldsymbol{\lambda}$, 
the monodromy representation of $\nabla_{\boldsymbol{\lambda}}$ is conjugated to an explicit
representation 
\begin{equation*}
\rho_{\boldsymbol{\lambda}} \colon \pi_1 (\mathbb{P}^n \setminus D_n) 
\longrightarrow 
\mathrm{SL}_2(\mathbb{C}),
\end{equation*}
which is virtually abelian, i.e. abelian after a finite cover of $\mathbb{P}^n \setminus D_n$,
and takes values in the infinite dihedral group $\boldsymbol{D}_{\infty}$.
}
\end{theo}

\subsection{Algebraic Garnier solution}

The $(2n-2)$-variable \textit{Garnier system} $\mathcal{G}_{2n-2}$ 
is the completely integrable Hamiltonian system
\begin{equation*}
\mathcal{G}_{2n-2} \colon
\left\{
\begin{aligned}
\frac{\partial \rho_j}{\partial t_i} &= -\frac{\partial K_i}{\partial \nu_j} &
i,j= 1,\ldots, 2n-2 \\
\frac{\partial \nu_j}{\partial t_i} &= \frac{\partial K_i}{\partial \rho_j} &
i,j= 1,\ldots, 2n-2,
\end{aligned}
\right.
\end{equation*}
where
\begin{equation*}
K_i= -\frac{\Lambda(t_i)}{T'(t_i)}
\left[ \sum^{2n-2}_{k=1} \frac{T(\nu_k)}{(\nu_k-t_i) \Lambda'(\nu_k)}
\left\{ \rho_k^2 -\sum^{2n}_{m=1} \frac{\theta_m-\delta_{im}}{\nu_k - t_m} \rho_k
+ \frac{\kappa}{\nu_k(\nu_k -1)} \right\}
\right]
\end{equation*}
with $t_{2n-1}=0$, $t_{2n}=1$, 
$\kappa:= \frac{1}{4}\left\{ (\sum^{2n}_{m=1} \theta_m -1)^2 - (\theta_{\infty}^2 +1) \right\}$,
$\Lambda(t):= \prod^{2n-2}_{k=1} (t-\nu_k)$ and 
$T(t):= \prod^{2n}_{k=1} (t-t_k)$
(see \cite{G1}, \cite{G2}, and \cite{Okamoto}).
Here $\theta_m$ ($m=1,\ldots,2n,\infty$) is the constant parameters defined by
\begin{equation*}
\begin{aligned}
\theta_1&= \frac{1}{2}, & \theta_2&= \frac{1}{2}, &
\theta_{2i+1}&= \lambda_{i+1},&
\theta_{2i+2}&= \lambda_{i+1}\ (i=1,\ldots,n-2) \\
\theta_{2n-1}&= \lambda_{1},&
\theta_{2n}&= \lambda_{0}-1,&
\theta_{\infty}&= \lambda_{0}+\lambda_{1}.
\end{aligned}
\end{equation*}

To give a solution of the Garnier system $\mathcal{G}_{2n-2}$,
we consider the Fuchsian system with $2n + 1$ regular singularities 
at $0,1,t_1 , \ldots , t_{2n-2}, \infty$:
\begin{equation}\label{2018.5.22.12.11}
 d+ 
\tilde{H}_{2n-1}\frac{d\tilde{x}}{\tilde{x}} + 
\tilde{H}_{2n}\frac{d\tilde{x}}{\tilde{x}-1} +
\sum_{i=1}^{2n-2} \tilde{H}_{i}\frac{d\tilde{x}}{\tilde{x}-t_i} ,
\end{equation}
where $\tilde{H}_{i}$ ($i=1, \ldots , 2n$) are 
$2\times 2$ matrices independent of $\tilde{x}$ and 
$t_i \neq t_j$ ($i \neq j$). 
We assume that  $\tilde{H}_{2n+1}:=-\sum_{i=1}^{2n} \tilde{H}_{i}$ is a diagonal matrix and 
the eigenvalues of $\tilde{H}_{i}$ ($i=1, \ldots , 2n+1$) are
as in Table \ref{2018.5.17.17.04}.

\begin{table}[htb]
\caption{The eigenvalues of the residue matrices 
($i=1,\ldots , n-2$).}
  \begin{tabular}{c|ccccccc}\label{2018.5.17.17.04}
  Reside matrices  & $\tilde{H}_{1}$ &
    $\tilde{H}_{2}$ & $\tilde{H}_{2i+1}$ & $\tilde{H}_{2i+2}$
    &  $\tilde{H}_{2n-1}$ &  $\tilde{H}_{2n}$ & $\tilde{H}_{2n+1}$ \\\hline
  Eigenvalues 
     & $\pm \frac{1}{4}$ 
      & $\pm \frac{1}{4}$ 
      & $\pm \frac{\lambda_{i+1}}{2}$ 
      & $\pm \frac{\lambda_{i+1}}{2}$ 
       & $\pm \frac{\lambda_1}{2}$  
    & $\pm \frac{\lambda_0-1}{2}$ 
      &  $\pm\frac{\lambda_0+\lambda_1}{2}$
  \end{tabular}
\end{table}

We fix generators $\gamma_{\tilde{x}}$ 
($\tilde{x}=0,1, t_1 \ldots ,t_{2n-2}, \infty$)
 of the fundamental group $\pi_1(\mathbb{P}^1 \setminus \{ 0,1,t_1,\ldots,t_{2n-2}, \infty \},*)$.
Here the loop $\gamma_{\tilde{x}}$ on $\mathbb{P}^1$ 
is oriented counter-clockwise, 
$\tilde{x}$ lies inside, while the other singular points lie outside.
Let $\rho'_{\boldsymbol{\lambda}}\colon 
\pi_1(\mathbb{P}^1 \setminus\{ t_1,\ldots,t_{2n}, \infty \},*) \ra \mathrm{SL}_2(\mathbb{C})$
be the representation of the fundamental group defined by Table \ref{2018.5.14.12.41}.
If we have the isomonodromic deformation of the Fuchsian system (\ref{2018.5.22.12.11})
whose preserved monodromy representation is conjugated to
 $\rho'_{\boldsymbol{\lambda}}$,
 then we obtain a solution of the Garnier system $\mathcal{G}_{2n-2}$ 
 (see \cite[Section 2]{Mazz}).

\begin{table}[htb]
\caption{The representation of the fundamental group;
here $a_j = \exp(-\pi \sqrt{-1} \lambda_j)$ $j=0,1,\ldots,n-1$.}
\begin{center}
  \begin{tabular}{c|c|c|c}\label{2018.5.14.12.41}
     $\gamma_0$ &  $\gamma_1$ & $\gamma_{t_1}$ &
    $\gamma_{t_2}$ \\\hline
    $\begin{pmatrix} a_1 & 0 \\ 0 & a_1^{-1} \end{pmatrix}$  
    & $\begin{pmatrix} -a_0 & 0 \\ 0 & -a_0^{-1} \end{pmatrix}$ 
     & $\begin{pmatrix} 0 & 1 \\ -1 &0 \end{pmatrix}$ 
      & $\begin{pmatrix} 0 & a_0^{2} \\ -a_0^{-2} &0 \end{pmatrix}$ 
  \end{tabular}\\
  \begin{tabular}{c|c|c}
     $\gamma_{t_{2i+1}}$  ($i=1,\ldots,n-2$) &  $\gamma_{t_{2i+2}}$  ($i=1,\ldots,n-2$)
      & $\gamma_{\infty}$\\\hline
    $\begin{pmatrix} a_{i+1} & 0 \\ 0 & a_{i+1}^{-1} \end{pmatrix}$  
    & $\begin{pmatrix} a^{-1}_{i+1} & 0 \\ 0 & a_{i+1} \end{pmatrix}$ 
     & $\begin{pmatrix} a_0 a_1^{-1} & 0 \\ 0 &a_0^{-1} a_1 \end{pmatrix}$ 
  \end{tabular}
  \end{center}
\end{table}

We say $(\rho_j(t_1,\ldots,t_{2n-2}), \nu_j(t_1,\ldots,t_{2n-2}))_{j=1,...,2n-2} $
is an \textit{algebraic solution of $\mathcal{G}_{2n-2}$}
if $(\rho_j(t_1,\ldots,t_{2n-2}), \nu_j(t_1,\ldots,t_{2n-2}))_{j=1,...,2n-2} $ 
satisfies the Garnier system $\mathcal{G}_{2n-2}$
and the graph of the solution has Zariski closure of dimension $2n-2$.

\begin{theo}\label{2019.7.10.22.13}
Let $T$ be a certain Zariski open subset of $\mathbb{A}^{2n-2}$
parametrizing generic lines in $\mathbb{P}^n$.
From the natural morphism $\mathbb{P}^1 \times T \rightarrow \mathbb{P}^n$,
one obtains a relative connection $(\nabla_{\mathbb{P}^1 \times T/T})_{\boldsymbol{\lambda}}$
with $2n+1$ simple poles by the pull-back of $\nabla_{\boldsymbol{\lambda}}$.
\begin{itemize}
\item[(i)] Up to an \'etale base change $\tilde{T} \rightarrow T$, an isomorphism of the 
relative trivial bundle, and up to relative M\"obius transformations in the base, 
we can consider
the relative connection $(\nabla_{\mathbb{P}^1 \times T/T})_{\boldsymbol{\lambda}}$
as a family of the Fuchsian system (\ref{2018.5.22.12.11})
parametrized by $T$.
\item[(ii)] The family $(\nabla_{\mathbb{P}^1 \times T/T})_{\boldsymbol{\lambda}}$
is isomonodromic. 
The preserved monodromy representation of the fundamental group 
$\pi_1(\mathbb{P}^1 \setminus \{ 0,1,t_1,\ldots,t_{2n-2}, \infty \},*)$
of this isomonodromic family
 is conjugated to the representation given by Table \ref{2018.5.14.12.41}
 \item[(iii)] Since $\dim \tilde{T}=2n-2$, the connection matrices 
 of the isomonodromic family $(\nabla_{\mathbb{P}^1 \times T/T})_{\boldsymbol{\lambda}}$
defines an algebraic solution of the Garnier system $\mathcal{G}_{2n-2}$.
\end{itemize}
\end{theo}

In the case of $n = 2$, 
the family of connections $\nabla_{\boldsymbol{\lambda}}$ 
have been established by Girand in \cite{Girand}.
Moreover Girand have discussed 
an explicit relation to certain algebraic solutions of the sixth Painlev\'e equation in \cite{Girand}.
Our argument is generalization of 
Girand's idea of explicit construction of $\nabla_{\boldsymbol{\lambda}}$, 
and of the proof of the main results, to the case $n \ge 2$.

The organization of this paper is as follows.
In Section \ref{2018.5.18.17.10}, we introduce  
a family, parametrized by $\boldsymbol{\lambda} \in \mathbb{C}^n$, 
of meromorphic $\mathfrak{sl}_2$-connections 
$\nabla_{\boldsymbol{\lambda}} = d + A_{\boldsymbol{\lambda}}$ on the trivial bundle 
$\mathcal{O}_{\mathbb{P}^n}\oplus\mathcal{O}_{\mathbb{P}^n}$ 
over $\mathbb{P}^n$ with $n \ge 2$, 
with an explicit connection matrix $A_{\boldsymbol{\lambda}}$.
In Section \ref{2019.7.10.22.00},
we show Theorem \ref{2019.7.10.21.43} and Theorem \ref{2019.7.10.15.33}.
In Section \ref{2018.5.18.17.14}, we compute the monodromy representation of  
$\nabla_{\boldsymbol{\lambda}}$ for generic $\boldsymbol{\lambda}$.
In Section \ref{2019.7.10.22.02},
we show Theorem \ref{2019.7.10.22.03}.
In Section \ref{2018.5.18.17.17}, 
we consider the natural morphism $\mathbb{P}^1 \times T \rightarrow \mathbb{P}^n$,
where $T$ is a certain Zariski open subset of $\mathbb{A}^{2n-2}$
parametrizing generic lines in $\mathbb{P}^n$.
Let $(\nabla_{\mathbb{P}^1 \times T/T})_{\boldsymbol{\lambda}}$ be 
the relative connection with $2n+1$ simple poles
given by the pull-back of $\nabla_{\boldsymbol{\lambda}}$.
In Section \ref{2019.7.10.22.11}, 
we introduce an \'etale base change $\tilde{T} \rightarrow T$
to prove the assertion (i) of Theorem \ref{2019.7.10.22.13}.
In Section \ref{2019.7.12.11.27},  
after the \'etale base change $\tilde{T} \rightarrow T$,
we compute the residue matrix of 
$(\nabla_{\mathbb{P}^1 \times \tilde{T}/\tilde{T}})_{\boldsymbol{\lambda}}$ 
for each simple pole.
In Section \ref{2019.7.10.22.15}, 
we recall the relation between isomonodromic deformations and the Garnier system
following \cite{Mazz}.
In Section \ref{2019.7.12.11.28},
we show Theorem \ref{2019.7.10.22.13}.

\section{Construction of flat connections on projective spaces}\label{2018.5.18.17.10}

In this section,
we introduce  
a family, parametrized by $\boldsymbol{\lambda} \in \mathbb{C}^n$, 
of meromorphic $\mathfrak{sl}_2$-connections 
$\nabla_{\boldsymbol{\lambda}} = d + A_{\boldsymbol{\lambda}}$ on the trivial bundle 
$\mathcal{O}_{\mathbb{P}^n}\oplus\mathcal{O}_{\mathbb{P}^n}$ 
over $\mathbb{P}^n$ with $n \ge 2$, 
with the explicit connection matrix $A_{\boldsymbol{\lambda}}$ 
described in Section \ref{2019.7.10.22.19}.
For this introduction, we start from
a family, parametrized by $\boldsymbol{\lambda} \in \mathbb{C}^n$, 
of flat meromorphic $\mathfrak{sl}_2$-connections $(\nabla_0)_{\boldsymbol{\lambda}}$ 
on the trivial bundle 
$\mathcal{O}_{\mathbb{C}^n}\oplus\mathcal{O}_{\mathbb{C}^n}$ 
over $\mathbb{C}^n$
whose connection matrix splits.
Next, we consider a birational transformation of the projective connection 
$\mathbb{P}((\nabla_0)_{\boldsymbol{\lambda}})$.
We define a generically finite Galois morphism 
$f \colon \mathbb{C}^n \rightarrow \mathbb{C}^n$.
We show that this birational transformation descend to
 a projective connection over $\mathbb{C}^n$.
We denote by $\mathbb{P}((\nabla_1)_{\boldsymbol{\lambda}})$ this projective connection.
The connection corresponding to $\mathbb{P}((\nabla_1)_{\boldsymbol{\lambda}})$
does not split. 
If we extend the projective connection $\mathbb{P}((\nabla_1)_{\boldsymbol{\lambda}})$ 
over $\mathbb{C}^n$
to a projective connection over $\mathbb{P}^n$ naively,
then the extended projective connection over $\mathbb{P}^n$ has poles of oder $2$ along
the divisor $\mathbb{P}^n \setminus \mathbb{C}^n$.
Then we consider a birational transformation 
of $\mathbb{P}((\nabla_1)_{\boldsymbol{\lambda}})$.
By this birational transformation, we obtain 
the meromorphic $\mathfrak{sl}_2$-connections 
$\nabla_{\boldsymbol{\lambda}} = d + A_{\boldsymbol{\lambda}}$
with the explicit connection matrix $A_{\boldsymbol{\lambda}}$ 
described in Section \ref{2019.7.10.22.19}.
Finally Theorem \ref{2019.7.10.21.43} and Theorem \ref{2019.7.10.15.33} follow from 
this construction of $\nabla_{\boldsymbol{\lambda}}$.

\subsection{Flat connections $(\nabla_0)_{\boldsymbol{\lambda}}$ defined by
rational closed 1-forms}
Let $\lambda_0, \ldots, \lambda_{n-1} $ be complex numbers.
Set $Y:=\Spec \mathbb{C} [u_0,u_1,z_1,\ldots,z_{n-2}]$.
Let $\omega_0$ and $\psi_n$ be the closed rational 1-forms on $Y$ defined by
\begin{equation*}
\begin{aligned}
\omega_0 :=&\ \lambda_0 \left( \frac{du_0}{u_0} - \frac{du_1}{u_1}\right) 
+ \lambda_1 \left( \frac{du_0}{u_0-1} - \frac{du_1}{u_1-1}\right)\\
\psi_n :=&\ 
\begin{cases}
\sum_{i=1}^{n-2} \lambda_{i+1} \left(
\frac{  d(u_0-u_1 + z_i)}{u_0-u_1 + z_i}
-\frac{d (u_0-u_1 - z_i)}{u_0-u_1 - z_i} \right) & n>2\\
0 & n=2.
\end{cases} 
\end{aligned}
\end{equation*}
We have a family of flat connections
\begin{equation*}
(\nabla_0)_{\boldsymbol{\lambda}} :=d
+\frac{1}{2}
\begin{pmatrix}
\omega_0 + \psi_n & 0 \\
0 & -\omega_0 - \psi_n\\
\end{pmatrix}
\end{equation*}
on the trivial rank 2 vector bundle $E_0 \ra Y$.
The family $(\nabla_0)_{\boldsymbol{\lambda}}$ 
is parametrized by $\boldsymbol{\lambda}=(\lambda_0,\ldots,\lambda_{n-1})$.
On the associated projective bundle $\mathbb{P}(E_0)$,
we have the associated projective connection
$\mathbb{P}((\nabla_0)_{\boldsymbol{\lambda}})=d w_0 + (\omega_0 + \psi_n ) w_0$,
where $w_0$ is a projective coordinate on the fibers.

\subsection{Descent of the connection $(\nabla_0)_{\boldsymbol{\lambda}}$}\label{2019.7.10.23.01}
We consider the birational transformation of the projective connection 
$\mathbb{P}((\nabla_0)_{\boldsymbol{\lambda}})$ defined by
 $\Phi \colon \mathbb{P}(E_0) \dashrightarrow \mathbb{P}(E_0)$;
\begin{equation*}
\begin{aligned}
(u_0,u_1,z_1,\ldots,z_{n-2}, [w_0^0:w_0^1]) 
\longmapsto (u_0,u_1,z_1,\ldots,z_{n-2},[\tilde{w}_0^0:\tilde{w}_0^1] ),
\end{aligned}
\end{equation*}
where 
\begin{equation}\label{2018.5.4.11.20}
\begin{aligned}
\frac{\tilde{w}_0^1}{\tilde{w}_0^0}=
(u_0 - u_1) \frac{w^1_0+w^0_0}{w^1_0-w_0^0} .
\end{aligned}
\end{equation}
The rational function (\ref{2018.5.4.11.20}) is an invariant 
of the involution $\iota\colon \mathbb{P}(E_0) \ra \mathbb{P}(E_0)$;
\begin{equation*}
\begin{aligned}
\iota\colon
(u_0,u_1,z_1,\ldots,z_{n-2}, [w_0^0:w_0^1]) 
\longmapsto (u_1,u_0,z_1,\ldots,z_{n-2},[w_0^1:w_0^0] ),
\end{aligned}
\end{equation*}
that is $(\tilde{w}_0^1/\tilde{w}_0^0) \circ \iota=\tilde{w}_0^1/\tilde{w}_0^0$ as 
functions on $\mathbb{P}(E_0)$.
Put $w_0=w_0^1/w_0^0$ and $\tilde{w}_0=\tilde{w}_0^1/\tilde{w}_0^0$. 
We can check the following proposition by direct computation.
\begin{prop}
We define a map $f \colon Y \ra \mathbb{P}^n$ by
\begin{equation*}
\begin{aligned}
(u_0,u_1,z_1, \dots,z_{n-2}) 
&\longmapsto [s_1:s_2 : z_1 : \ldots :z_{n-2}:1 ],
\end{aligned}
\end{equation*}
where $s_1=u_0+u_1$ and $s_2=u_0 u_1$.
The birational transformation $(\Phi^{-1})^* \mathbb{P}((\nabla_0)_{\boldsymbol{\lambda}})$
on $\mathbb{P}(E_0)$ descends to a projective connection on
$f(Y) \times \mathbb{P}^1 \ra f(Y)$:
\begin{equation}\label{2018.4.27.14.06}
\begin{aligned}
(\Phi^{-1})^* \mathbb{P}((\nabla_0)_{\boldsymbol{\lambda}})
&=\frac{d \tilde{w}_0}{d w_0} \left( dw_0 + (\omega_0 + \psi_n) w_0 \right)  \\
&= 
d \tilde{w}_0 + \left( \alpha_2 (s_1,s_2) + \sum^{n-2}_{i=1} 
\alpha^i_2 (s_1,s_2, z_i) \right) \tilde{w}_0^2 \\
&\quad + 2 \alpha_1(s_1,s_2) \tilde{w}_0 
+ \left(\alpha_0 ( s_1,s_2) + \sum^{n-2}_{i=1} 
\alpha^i_0 (s_1,s_2, z_i)\right),
\end{aligned}
\end{equation}
where
\begin{equation}\label{2018.4.27.14.04}
\begin{aligned}
\alpha_0 ( s_1,s_2) &:= 
\frac{2 \lambda_0 (1  - s_1 + s_2 ) +  \lambda_1 (-s_1 + 2 s_2)}{2( 1-  s_1 +  s_2)} d s_1 
 -
\frac{ \lambda_0 s_1 ( 1 - s_1  + s_2) +\lambda_1 s_2 (s_1-2)}{2 s_2 (1  - s_1 + s_2)}  ds_2, \\
\alpha^i_0 ( s_1,s_2,z_i)&:= \lambda_{i+1} \left(  d z_i
-
\frac{ z_id (s_1^2-4 s_2 - z_i^2) }{2(s_1^2-4 s_2 - z_i^2)} \right) ,\\
\alpha_1(s_1,s_2)
 &:=-\frac{1}{4}\frac{d(s_1^2 - 4 s_2)}{s_1^2 - 4 s_2}, \\
\alpha_2(s_1,s_2)
& := - \frac{\alpha_0(s_1,s_2)}{s_1^2 -4 s_2} , \\
\alpha^i_2(s_1,s_2, z_i) &:=-    \frac{\alpha^i_0(s_1,s_2, z_i)}{s_1^2-4 s_2}.
\end{aligned}
\end{equation}
\end{prop}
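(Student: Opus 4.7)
The proof rests on identifying $f\colon Y\to\mathbb{P}^n$ with the quotient morphism by the involution $\iota$ swapping $u_0$ and $u_1$, since $s_1,s_2,z_1,\ldots,z_{n-2}$ generate the $\iota$-invariant subring of $\mathbb{C}[u_0,u_1,z_1,\ldots,z_{n-2}]$. My plan is to lift $\iota$ to the projective bundle $\mathbb{P}(E_0)$ as the involution displayed in the text, show that the Riccati foliation $\eta := dw_0 + (\omega_0+\psi_n)w_0 = 0$ is $\iota$-invariant, and then express the descended foliation in terms of an explicit $\iota$-invariant fibre coordinate.

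For the first step I would verify by direct substitution that $\iota^*\omega_0 = -\omega_0$ and $\iota^*\psi_n = -\psi_n$: for $\omega_0$ this is immediate from swapping each pair of logarithmic differentials $du_j/u_j$ and $du_j/(u_j-1)$, while for $\psi_n$ one uses that $\iota$ sends $u_0-u_1\pm z_i$ to $-(u_0-u_1\mp z_i)$, so each summand is negated. Combined with $\iota^* w_0 = 1/w_0$, this yields $\iota^*\eta = -w_0^{-2}\eta$, so the foliation $\{\eta=0\}$ descends to $\mathbb{P}(E_0)/\langle\iota\rangle$, which projects to $f(Y)\times\mathbb{P}^1 \to f(Y)$. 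The natural invariant fibre coordinate is $\tilde w_0 = \delta(w_0+1)/(w_0-1)$, where $\delta := u_0-u_1$: the sign picked up by $\delta$ under $\iota$ cancels the sign from $w_0 \mapsto 1/w_0$ in the Möbius factor. Together with $s_1,s_2,z_i$ this generates the function field of the quotient, so expressing $\eta$ in these coordinates gives the desired descended Riccati form; inverting yields $w_0 = (\tilde w_0+\delta)/(\tilde w_0-\delta)$.

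The remainder is an explicit computation. Substituting into $\eta$, clearing the denominator $(\tilde w_0-\delta)^2$, and extracting the coefficients of $1$, $\tilde w_0$ and $\tilde w_0^2$ produces an equation of the form $d\tilde w_0 + A_2\tilde w_0^2 + 2 A_1\tilde w_0 + A_0 = 0$. Two algebraic manipulations drive the identification with (\ref{2018.4.27.14.06}). First, $\delta^2 = s_1^2 - 4 s_2$ gives $d\delta/\delta = (s_1\,ds_1 - 2\,ds_2)/(s_1^2-4 s_2)$, which is exactly $-2\alpha_1(s_1,s_2)$; this accounts for the cross-term producing the $\tilde w_0$-coefficient. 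Second, the anti-invariant 1-forms $\omega_0$ and each summand of $\psi_n$ become $\delta$ times rational 1-forms in $s_1,s_2,z_i$, and direct expansion using $u_0u_1=s_2$, $(u_0-1)(u_1-1)=1-s_1+s_2$, and $(u_0-u_1)^2-z_i^2 = s_1^2-4s_2-z_i^2$ identifies them with the $\alpha_0(s_1,s_2)$ and $\alpha^i_0(s_1,s_2,z_i)$ of (\ref{2018.4.27.14.04}). The further factor $1/\delta^2 = 1/(s_1^2-4s_2)$ produced by the Möbius denominator then transforms these into $\alpha_2$ and $\alpha^i_2$ in the $\tilde w_0^2$-coefficient.

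The main obstacle is purely the bookkeeping in this last step: several independent contributions combine into each $A_k$, and the cross-term between $dw_0$ and $d\delta$ coming from the base-dependence of the Möbius factor must be tracked carefully, since it is precisely this cross-term that produces the $2\alpha_1$-coefficient of $\tilde w_0$. Conceptually there is no essential difficulty: the construction has been engineered so that the $\iota$-anti-invariance of $\omega_0+\psi_n$ matches that of $\delta$, making the descent automatic and guaranteeing a \emph{rational} Riccati equation downstairs.
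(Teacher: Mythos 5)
Your proposal is correct and follows essentially the same route as the paper, which sets up the involution $\iota$ and the invariant fibre coordinate $\tilde w_0=(u_0-u_1)\frac{w_0+1}{w_0-1}$ in Section~2.2 and then simply asserts the proposition ``by direct computation.'' Your write-up supplies that computation with the right structure: the anti-invariance $\iota^*(\omega_0+\psi_n)=-(\omega_0+\psi_n)$, the substitution $w_0=(\tilde w_0+\delta)/(\tilde w_0-\delta)$ with $\delta^2=s_1^2-4s_2$, and the identifications $\tfrac{\delta}{2}\omega_0=\alpha_0$, $\tfrac{\delta}{2}\psi_n^{(i)}=\alpha_0^i$, $-d\delta/\delta=2\alpha_1$ all check out against \eqref{2018.4.27.14.04}.
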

The corresponding connection $(\nabla_1)_{\boldsymbol{\lambda}}$
on $f(Y) \times \mathbb{C}^2 \ra f(Y)$
 is
\begin{equation*}
(\nabla_1)_{\boldsymbol{\lambda}}=
d+ \begin{pmatrix}
\alpha_1 (s_1,s_2) & \alpha_0 (s_1,s_2)\\
-\alpha_2 (s_1,s_2) & -\alpha_1 (s_1,s_2)
\end{pmatrix}+\sum^{n-1}_{i=1} 
\begin{pmatrix}
0 & \alpha_0^i (s_1,s_2,z_i)\\
-\alpha_2^i (s_1,s_2,z_i) & 0
\end{pmatrix}.
\end{equation*}
We consider a relation between 
this connection and the connection $(\nabla_0)_{\boldsymbol{\lambda}}$.
Let $\nabla_0'$ be the meromorphic connection on $Y \times \mathbb{C}\ra Y$
defined by
$\nabla_0':=d-\frac{1}{2} 
\frac{d(u_0-u_1)}{u_0-u_1}$.
We define a matrix $M_1(u_0,u_1)$ on $Y$ by
\begin{equation*}
M_1(u_0,u_1):=
\begin{pmatrix}
-1 & -u_0+ u_1\\
-1 & u_0- u_1
\end{pmatrix}.
\end{equation*}
Let $\nabla_0''$ be the meromorphic connection on $Y \times \mathbb{C}^2\ra Y$
defined by
\begin{equation*}
\begin{aligned}
\nabla_0''&:=d+M_1(u_0,u_1)^{-1} dM_1(u_0,u_1) \\
&\qquad +
M_1(u_0,u_1)^{-1}
\frac{1}{2}
\begin{pmatrix}
\omega_0 + \psi_n & 0 \\
0 & -\omega_0 - \psi_n\\
\end{pmatrix}
M_1(u_0,u_1).
\end{aligned}
\end{equation*}
Then we have 
\begin{equation}\label{2018.5.8.23.53}
f^* (\nabla_1)_{\boldsymbol{\lambda}} = \nabla_0'' \otimes \nabla_0'.
\end{equation}

Moreover, we consider the map $\mathbb{P}^n \ra \mathbb{P}^n$;
$[s_1:s_2:z_1:\ldots:z_{n-2}:t] \mapsto [x:y:z_1:\ldots :z_{n-2}:t]$,
where $x:=t - s_1 +s_2$ and $y:=s_2$.
Set
\begin{equation*}
\begin{aligned}
f(x,y) :=&\ x^2 + y^2 +1  -2 (x y + x + y)  .
\end{aligned} 
\end{equation*}
Then the rational 1-forms (\ref{2018.4.27.14.04}) are transformed into 
\begin{equation}\label{2018.4.27.13.15}
\begin{aligned}
\alpha_0(x,y) &= - 
\frac{(2 \lambda_0 + \lambda_1)dx - (2 \lambda_1 + \lambda_0)dy}{2} -
\frac{ \lambda_1 ( y-1)}{2}\frac{ dx}{x} +
\frac{ \lambda_0 ( x-1)}{2}\frac{ dy}{y} , \\
\alpha^i_0(x,y, z_i) &=\lambda_{i+1} \left(  d z_i
-
\frac{ z_id (f(x,y)-z_i^2) }{2(f(x,y)-z_i^2)} \right), \\
\alpha_1(x,y) &= - \frac{1}{4}\frac{ d f(x,y)}{f(x,y)},\\
\alpha_2(x,y)&= -\frac{\alpha_0(x,y)}{f(x,y)}, \\
\alpha^i_2(x,y, z_i)&= -\frac{\alpha^i_0(x,y, z_i)}{f(x,y)}  ,
\end{aligned}
\end{equation}
which are described by the affine coordinates $[x:y:z_1:\ldots:z_{n-2}:1]$.

\subsection{Birational transformations of the connection 
$(\nabla_1)_{\boldsymbol{\lambda}}$}\label{2019.7.10.22.00}
From the connection $(\nabla_1)_{\boldsymbol{\lambda}}$ on $ f(Y) \times \mathbb{C}^2 \ra f(Y)$, 
we construct a connection on the trivial bundle $\mathbb{P}^n \times \mathbb{C}^2 \ra \mathbb{P}^n$
whose pole divisor is $D_n$.
If we extend the rational 1-forms (\ref{2018.4.27.13.15}) to
rational 1-forms on $\mathbb{P}^n$,
then 
$\alpha_0(x,y)$, 
$\alpha^i_0(x,y, z_i)$ and $\alpha_1(x,y)$ have
 poles of order $2$, $2$ and $1$ along the divisor $(t=0)$, respectively.
 On the other hand, 
 the rational 1-forms $\alpha_2(x,y)$ and $\alpha_2^i(x,y, z_i)$ have no pole along
the divisor $(t=0)$.
So we consider a birational transformation of the projective connection (\ref{2018.4.27.14.06}) 
as follows.
The $dy/y$ part of the projective connection (\ref{2018.4.27.14.06}) is 
\begin{equation*}
\begin{aligned}
& d \tilde{w}_0 - \frac{\lambda_0 (\tilde{w}_0 - x+1)(\tilde{w}_0 + x-1)}{x+1} \frac{dy}{y}  \\
&\quad + [\text{ terms whose pole divisors do not contain the divisor $(y=0)$ }] .
\end{aligned}
\end{equation*}
Then we consider the following birational map
\begin{equation}\label{2018.4.13.16.49}
\begin{aligned}
\mathbb{P}^n \times \mathbb{P}^1
&\dashrightarrow \mathbb{P}^n \times \mathbb{P}^1 \\
([x:y: z_1: \ldots: z_{n-2}:1],[1: \tilde{w}_0]) \
&\longmapsto ( [x: y: z_1: \ldots  :z_{n-2}:1], [1:w]),
\end{aligned}
\end{equation}
where $\tilde{w}_0 -x+1 = w y$.
By this birational transformation (\ref{2018.4.13.16.49}), 
the projective connection (\ref{2018.4.27.14.06}) is transformed into
\begin{equation*}
\begin{aligned}
& d w + \left( \cA_{21} (x,y) 
+ \sum^{n-2}_{i=1}  \cA_{21}^i (x,y, z_i)\right) w^2 \\
& \  + 2
 \left(\cA_{11}^i(x,y)
 + \sum_{i=1}^{n-2}   \cA_{11}^i (x,y, z_i)
 \right) 
  w  +\cA_{12}^i(x,y)
 + \sum_{i=1}^{n-2}   \cA_{12}^i (x,y, z_i),
\end{aligned}
\end{equation*}
where
\begin{equation}\label{2018.5.21.12.20}
\begin{aligned}
\cA_{21} (x,y)&:=  y  \alpha_2 (x,y), \\
\cA_{21}^i (x,y,z_i)&:= y \alpha^i_2 (x,y, z_i) , \\
\cA_{11} (x,y)&:=
 (x-1)  \alpha_2 (x,y)  
 +  \alpha_1(x,y) +\frac{1}{2}\frac{d y}{y} ,\\
\cA_{11}^i (x,y,z_i)&:=  (x-1)  \alpha^i_2 (x,y, z_i) ,\\
\cA_{12} (x,y) &:= \frac{dx    + (x-1)^2\alpha_2 (x,y)
 + 2 (x -1)\alpha_1(x,y)  + \alpha_0(x,y)  }{y} , \\
\cA_{12}^i (x,y,z_i)&:=
 \frac{(x-1)^2\alpha^i_2 (x,y, z_i) + \alpha^{i}_0(x,y, z_i)  }{y} .
\end{aligned}
\end{equation}
The corresponding connection $\nabla_{\boldsymbol{\lambda}}$ on
$ \mathbb{P}^n \times \mathbb{C}^2 \ra \mathbb{P}^n$ is 
\begin{equation*}
\begin{aligned}
\nabla_{\boldsymbol{\lambda}}
=d+ 
\begin{pmatrix}
\cA_{11} (x,y) & \cA_{12} (x,y) \\
-\cA_{21} (x,y) &- \cA_{11} (x,y)
\end{pmatrix} + \sum_{i=1}^{n-2}
\begin{pmatrix}
\cA_{11}^i (x,y,z_i) & \cA_{12}^i (x,y,z_i) \\
-\cA_{21}^i (x,y,z_i) &- \cA_{11}^i (x,y,z_i)
\end{pmatrix},
\end{aligned}
\end{equation*}
whose polar divisor is $D_n$.
This connection $\nabla_{\boldsymbol{\lambda}}$ 
is the connection described in Section \ref{2019.7.10.22.19}.
We consider a relation between $\nabla_{\boldsymbol{\lambda}}$ and 
$(\nabla_1)_{\boldsymbol{\lambda}}$.
Let $\nabla_1'$ be the meromorphic connection 
on $\mathbb{P}^n \times \mathbb{C} \ra \mathbb{P}^n$ defined by
$\nabla_1':=d-\frac{1}{2}\frac{dy}{y}$.
We define a matrix $M_2(x,y)$ on $Y$ by
\begin{equation*}
M_2(x,y):=  
\begin{pmatrix}
y & x-1\\
0 &1
\end{pmatrix} .
\end{equation*}
Let $\nabla_1''$ be the meromorphic connection 
on $\mathbb{P}^n \times \mathbb{C}^2 \ra \mathbb{P}^n$ defined by
\begin{equation*}
\begin{aligned}
\nabla_1''=d+& M_2(x,y)^{-1} dM_2(x,y)
+
M_2(x,y)^{-1}
\begin{pmatrix}
\alpha_1(x,y)  & \alpha_0(x,y)  \\
-\alpha_2(x,y)  &- \alpha_1(x,y) \\
\end{pmatrix}
M_2(x,y)\\
&\quad +
\sum_{i=1}^{n-2}
M_2(x,y)^{-1}
\begin{pmatrix}
0  & 
 \alpha^i_0(x,y, z_i) \\
-\alpha^i_2(x,y, z_i) & 0 \\
\end{pmatrix}
M_2(x,y).
\end{aligned}
\end{equation*}
We can check that 
\begin{equation}\label{2018.5.8.23.54}
 \nabla_{\boldsymbol{\lambda}} = 
\nabla''_1 \otimes \nabla'_1.
\end{equation}
By a combination of the equalities (\ref{2018.5.8.23.53}) and (\ref{2018.5.8.23.54}),
we have the following proposition:
\begin{prop}\label{2019.7.12.12.28}
The pull-back $ f^* \nabla_{\boldsymbol{\lambda}}$ 
is birationally equivalent to 
$(\nabla_0)_{\boldsymbol{\lambda}}\otimes \nabla_0'\otimes f^* \nabla_1'$.
\end{prop}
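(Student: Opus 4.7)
The plan is to chain together the two identities (\ref{2018.5.8.23.53}) and (\ref{2018.5.8.23.54}) already established in the paper, converting gauge equivalence into birational equivalence of connections where necessary.

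First I would apply $f^*$ to the identity $\nabla_{\boldsymbol{\lambda}} = \nabla_1'' \otimes \nabla_1'$ from (\ref{2018.5.8.23.54}) to obtain
\begin{equation*}
f^* \nabla_{\boldsymbol{\lambda}} = f^* \nabla_1'' \otimes f^* \nabla_1'.
\end{equation*}
By the very definition of $\nabla_1''$, the connection $\nabla_1''$ is the gauge transform of $(\nabla_1)_{\boldsymbol{\lambda}}$ by the matrix $M_2(x,y)$, which is invertible on the Zariski open locus $\{y \neq 0\}$. Hence $f^* \nabla_1''$ is the gauge transform of $f^* (\nabla_1)_{\boldsymbol{\lambda}}$ by $f^* M_2$, and in particular $f^* \nabla_1''$ is birationally equivalent to $f^* (\nabla_1)_{\boldsymbol{\lambda}}$.

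Next, I would invoke (\ref{2018.5.8.23.53}), namely $f^* (\nabla_1)_{\boldsymbol{\lambda}} = \nabla_0'' \otimes \nabla_0'$, to rewrite the right-hand side as $\nabla_0'' \otimes \nabla_0' \otimes f^* \nabla_1'$. Since the rank-one factors $\nabla_0'$ and $f^* \nabla_1'$ tensor through unchanged under the birational bundle transformation, this shows that $f^* \nabla_{\boldsymbol{\lambda}}$ is birationally equivalent to $\nabla_0'' \otimes \nabla_0' \otimes f^* \nabla_1'$. Finally, the definition of $\nabla_0''$ exhibits it as the gauge transform of $(\nabla_0)_{\boldsymbol{\lambda}}$ by $M_1(u_0,u_1)$, which is invertible on $\{u_0 \neq u_1\}$, so $\nabla_0''$ is birationally equivalent to $(\nabla_0)_{\boldsymbol{\lambda}}$. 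Composing the two birational bundle transformations yields the desired birational equivalence between $f^* \nabla_{\boldsymbol{\lambda}}$ and $(\nabla_0)_{\boldsymbol{\lambda}} \otimes \nabla_0' \otimes f^* \nabla_1'$.

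There is essentially no obstacle here beyond bookkeeping: the genuine computational content is hidden in the verification of (\ref{2018.5.8.23.53}) and (\ref{2018.5.8.23.54}), which was carried out in Sections \ref{2019.7.10.23.01} and \ref{2019.7.10.22.00}. The only minor subtlety worth flagging is that the equalities as stated are equalities of meromorphic connections after gauge transformation by explicit matrices $M_1$ and $M_2$; since both matrices have nonzero determinant on a dense open set, they define birational bundle transformations in the sense of the definition introduced before Theorem \ref{2019.7.10.15.33}, so passing from gauge equivalence to birational equivalence is automatic.
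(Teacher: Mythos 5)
Your proposal is correct and follows exactly the paper's intended route: the paper's own proof consists of the single remark that the proposition follows ``by a combination of the equalities (\ref{2018.5.8.23.53}) and (\ref{2018.5.8.23.54})'', and you have merely made explicit the bookkeeping that $\nabla_0''$ and $\nabla_1''$ are by definition the gauge transforms of $(\nabla_0)_{\boldsymbol{\lambda}}$ and $(\nabla_1)_{\boldsymbol{\lambda}}$ by $M_1$ and $M_2$, hence birationally equivalent to them. Your observation that the rank-one tensor factors pass through unchanged and that generically invertible gauge matrices give birational bundle transformations is exactly the implicit content of the paper's one-line argument.
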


\begin{proof}[Proof of Theorem \ref{2019.7.10.21.43} and Theorem \ref{2019.7.10.15.33}]
First, since $(\nabla_0)_{\boldsymbol{\lambda}}$,
$\nabla_0'$ and $\nabla_1'$ are flat and 
$f$ is a generically finite Galois morphism, we have 
the flatness of $\nabla_{\boldsymbol{\lambda}}$ for each $\boldsymbol{\lambda}$
by Proposition \ref{2019.7.12.12.28}.
Second, we have that $\nabla_{\boldsymbol{\lambda}}$
has at worst regular singularities for each $\boldsymbol{\lambda}$ 
by the explicit expression of $\nabla_{\boldsymbol{\lambda}}$ and 
\cite[Chap. II, Theorem 4.1 (ii)]{Deligne}.
Finally, the assertion of Theorem \ref{2019.7.10.15.33}
is deduced by Proposition \ref{2019.7.12.12.28}.
\end{proof}

\section{Monodromy representation}\label{2018.5.18.17.14}

In this section, we consider the monodromy representation 
$\pi_1(\mathbb{P}^n \setminus D_n,*) \ra \mathrm{SL}_{2}(\mathbb{C})$
of $\nabla_{\boldsymbol{\lambda}}$ for generic $\boldsymbol{\lambda}$.
In Section \ref{2019.7.10.23.32} and Section \ref{2019.7.10.23.33},
we discuss structure of the fundamental group $\pi_1(\mathbb{P}^n \setminus D_n,*)$
by using the Zariski's hyperplane section theorem 
and the Zariski--Van-Kampen method.
In Section \ref{2019.7.10.22.02}, we show Theorem \ref{2019.7.10.22.03} 
by using the results in Section \ref{2019.7.10.23.32} and Section \ref{2019.7.10.23.33}.

\subsection{Zariski's hyperplane section theorem}\label{2019.7.10.23.32}

Let $H_i$ ($i=1,\ldots,n-2$) be the hyperplanes in $\mathbb{P}^n$ defined by
\begin{equation*}
H_i := ( z_i - a_i x - b_i y -c_i t =0 ) \quad i=1,\ldots , n-2.
\end{equation*}
Here $a_i, b_i$, and $c_i $ $(i=1,\dots,n-2)$ are generic complex numbers.
For simplicity, we assume that $0<|a_i|\ll 1$ and $ 0 < |b_i | \ll 1$.
Let $f(x,y,t)$ be the following quadratic polynomial
\begin{equation*}
\begin{aligned}
f(x,y,t) :=&\ x^2 + y^2 +t^2  -2 ( x y +y t +  t x) \\
=&\ (y-x-t)^2 - 4 xt  .
\end{aligned}
\end{equation*}
Let $\tilde{\mathcal{Q}}_0$, $\tilde{\mathcal{Q}}_i$ and $\tilde{D}_n$ 
be the divisors on $\mathbb{P}^2=\mathbb{P}^n \cap (\cap_{i=1}^{n-2}H_i)$ defined by
\begin{equation*}
\begin{aligned}
\tilde{\mathcal{Q}}_0&:= (f(x,y,t)=0 ),   \\
\tilde{\mathcal{Q}}_i&:= (f(x,y,t) - ( a_i x + b_i y +c_i t)^2 =0)
\quad  (i=1,\ldots,n-2), \text{ and } \\
\tilde{D}_n &:= (x=0 ) + (y=0) + ( t=0) + 
\tilde{\mathcal{Q}}_0 + \tilde{\mathcal{Q}}_1 + \cdots + \tilde{\mathcal{Q}}_{n-2},
\end{aligned}
\end{equation*}
respectively.
By Zariski's hyperplane section theorem (for example see \cite{HL}), we have the natural isomorphism
\begin{equation*}
\pi_1(\mathbb{P}^n \setminus D_n,*) \cong \pi_1(\mathbb{P}^2 \setminus \tilde{D}_n,*).
\end{equation*}

\subsection{Zariski--Van-Kampen method}\label{2019.7.10.23.33}

We derive some equalities in $\pi_1(\mathbb{P}^2 \setminus \tilde{D}_n,*)$
 by the Zariski--Van-Kampen method (see for example \cite{Degtyarev}).

Let $\pi\colon \mathbb{P}^{2} \setminus \tilde{D}_n \ra \mathbb{P}^1$ be 
the projection defined by
\begin{equation*}
\begin{aligned}
\pi\colon \mathbb{P}^{2} \setminus \tilde{D}_n &\lra \mathbb{P}^1 \\
[x:y:t]&\longmapsto [x:t].
\end{aligned}
\end{equation*}
Let $\{[x^+_i:1] ,[ x_i^-:1 ]\} \subset \mathbb{P}^1$ 
be the roots of the discriminant of $f(x,y,t) - ( a_i x + b_i y +c_i t)^2$ with respect to $y$.
We denote $[x^+_i:1]$ and $[x^-_i:1]$ by $x^+_i$ and $x^-_i$, respectively.
Since $0<|a_i|\ll 1$ and $ 0 < |b_i | \ll 1$, 
there exists an element of $\{x^+_i , x_i^-\}$ in a neighborhood of $\infty=[0:1]$.
We assume that $x_i^-$ is a point in a neighborhood of $\infty$.
Set $a=[ a:1 ]$ where $0<|a|\ll 0$.
For $i=0,1,\dots,n-2$,
let $y^+_{i}$ and $y^-_{i}$ be the intersection of 
$\tilde{\mathcal{Q}}_i$ and $\pi^{-1}(a)$:
$\tilde{\mathcal{Q}}_i\cap \pi^{-1}(a) = \{ y^+_{i} , y^-_{i} \} $.
Here we assume that 
\begin{equation*}
0<\mathrm{Arg} \left( \frac{y^+_1-(a+1)}{y^+_0-(a+1)} \right) 
<\cdots<\mathrm{Arg} \left( \frac{y^+_{n-2}-(a+1)}{y^+_0-(a+1)} \right)
< \pi.
\end{equation*}
\begin{figure}[h]
\caption{Fibers of $\pi$.}
\begin{center}
\includegraphics[clip,width=9cm]{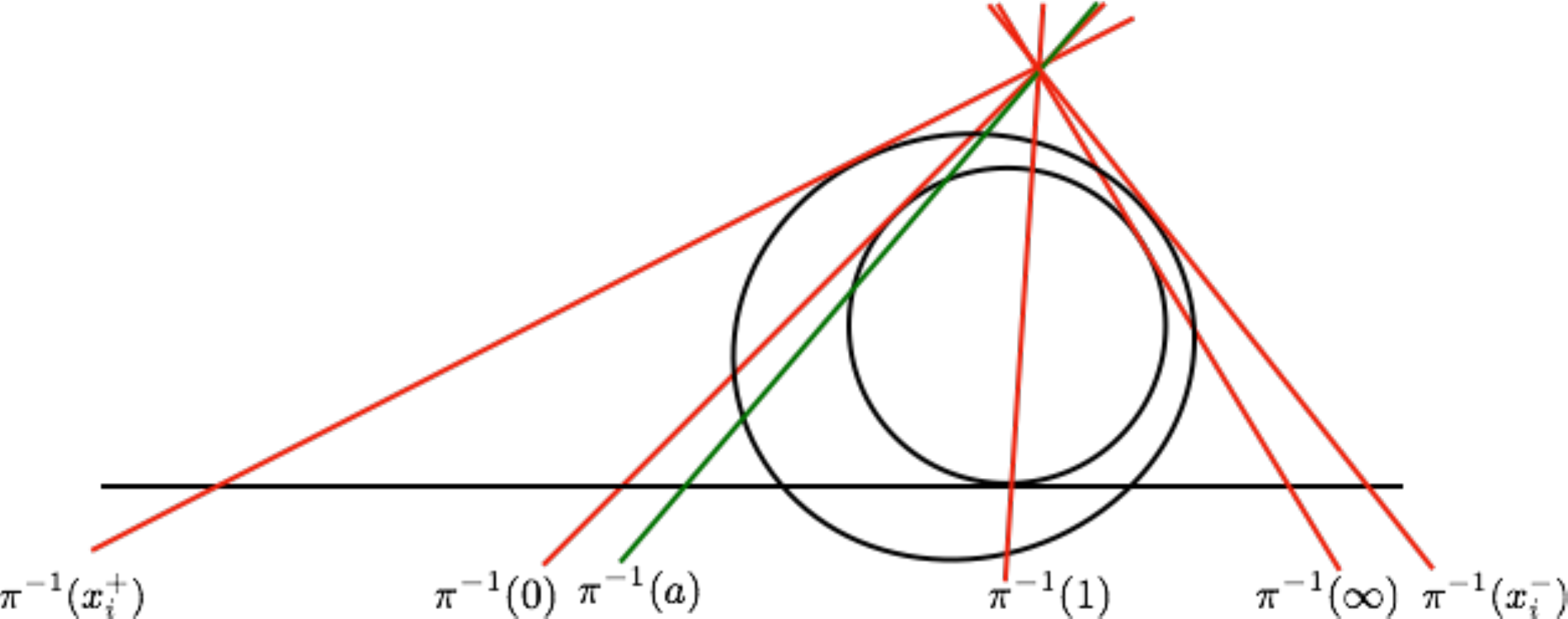}
\end{center}
\end{figure}

We define natural numbers $i_k$ and $j_k$ ($k=1,\dots,n-2$) 
so that 
$\{i_1 ,\ldots,i_{n-2} \}=\{1 ,\ldots,n-2 \}$,
$\{j_1 ,\ldots,j_{n-2} \}=\{1 ,\ldots,n-2 \}$, 
\begin{equation*}
0<\mathrm{Arg}\,  (x^+_{i_1})
<\cdots<\mathrm{Arg}\,  (x^+_{i_{n-2}}) 
< 2\pi \text{ and } 
0<\mathrm{Arg}\,  \frac{1}{x^-_{j_1}} 
<\cdots<\mathrm{Arg}\,  \frac{1}{x^-_{j_{n-2}}} 
< 2\pi.
\end{equation*}
Here we define the range of the principal value of arguments $\mathrm{Arg}$
by the closed-open interval $[0,2\pi)$.
Let $\Gamma$ 
be the group defined by
\begin{equation*}
\begin{aligned}
\Gamma &:=
\left\langle 
\begin{array}{l}
\alpha_0, \alpha_{y^+_0} ,\ldots , \alpha_{y^+_{n-2}} \\
\alpha_{y^-_0} ,\ldots , \alpha_{y^-_{n-2}},\alpha_{\infty} 
\end{array}
\middle| \ 
\alpha_0\alpha_{y^+_{1}}    \cdots \alpha_{y^+_{n-2}} \alpha_{y^-_{0}}
\alpha_{y^-_{1}}    \ldots \alpha_{y^-_{n-2}}
\alpha_{y^+_0} 
 \alpha_{\infty}=1 \right\rangle .
\end{aligned}
\end{equation*}
Then we have 
$\pi_1 (\pi^{-1} (a)  \setminus (\tilde{D}_n\cap \pi^{-1}(a) ) , *  ) \cong \Gamma$ 
and have an exact sequence
\begin{equation*}
 1  \longrightarrow \Gamma 
 \longrightarrow \pi_1(\mathbb{P}^2 \setminus \tilde{D}_n,*) 
 \longrightarrow \pi_1(\mathbb{P}^1 \setminus \{ 0,\infty \},a) \longrightarrow 1.
\end{equation*}
Let
\begin{equation}\label{1.21.16.34}
\gamma_0, \gamma_1 ,  \gamma_{x^+_1}, \ldots , \gamma_{x^+_{n-2}} ,
\gamma_{x^-_1}, \ldots , \gamma_{x^-_{n-2}}, \gamma_{\infty}
\end{equation}
be loops with base point $a$
on $\mathbb{P}^1\setminus \{ 0, 1 , x^{\pm}_1 ,\ldots ,x^{\pm}_{n-1}, \infty \}$ 
such that for $x\in \{ 0, 1 , x^{\pm}_1 ,\ldots ,x^{\pm}_{n-1}, \infty \}$,
the loop
$\gamma_x$ is oriented counter-clockwise, 
$x$ lies inside, while the other points 
$\{ 0, 1 , x^{\pm}_1 ,\ldots ,x^{\pm}_{n-1}, \infty \}\setminus \{x\}$ 
lie outside as in Figure 2.
\begin{figure}[h]
\caption{Loops on $\pi^{-1}(a)$ and $\mathbb{P}^1=(\mathbb{C}_x)_0 \cup (\mathbb{C}_x)_{\infty}.$}
\begin{center}
\includegraphics[clip,width=15cm]{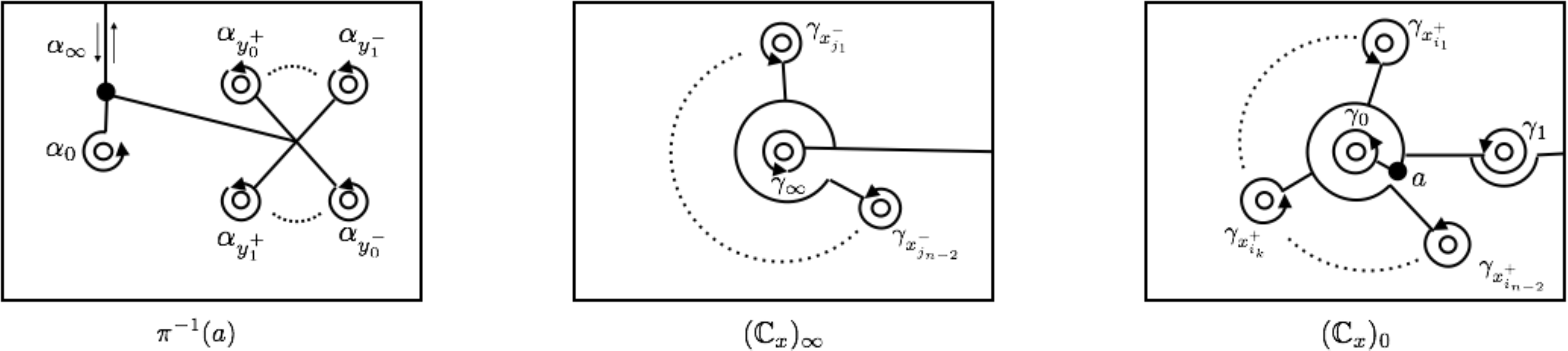}
\end{center}
\end{figure}

Let $s \colon \mathbb{P}^1 \setminus \{ 0,\infty \} \rightarrow \mathbb{P}^2 \setminus \tilde{D}_n $
be a continuous section of $\pi$ such that
$s(a)=* \in \mathbb{P}^2 \setminus \tilde{D}_n$.
For the loops (\ref{1.21.16.34}), we define the monodromy actions of the
loops (\ref{1.21.16.34}) on $\Gamma$ as in \cite[Theorem 2.2.1]{Cousin}.
Namely, the action $(\gamma_x,\alpha) \mapsto \gamma_x(\alpha)$
for loops $\gamma_x$ and $\alpha \in \Gamma$  is characterized by the equality
$\gamma_x(\alpha) = \gamma_x^{-1} \alpha \gamma_x$ 
in $\pi_1(\mathbb{P}^2 \setminus \tilde{D}_n,*) $.
Here we denote by $\gamma_x$ 
the loop $s_*(\gamma_x) \in \pi_1(\mathbb{P}^2 \setminus \tilde{D}_n,*) $
for simplicity.
For explicit computation of this action, we consider the motion of the points
$y^{\pm}_i$ ($i=0,1,\ldots,n-2$) when $a$ varies along the loop $\gamma_x$
and a continuous deformation of 
$\alpha\in \pi_1 (\pi^{-1} (a)  \setminus (\tilde{D}_n\cap \pi^{-1}(a) ) , *  )$
according to the motion of these points.
Note that the assumptions
$0<|a_i|\ll 1$, $ 0 < |b_i | \ll 1$ and $0<|a| \ll 1$ 
make the computation of the motion of the points $y_i^{\pm}$ simple.
By explicit computation of the action on some loops, we can check the following equalities:
\begin{align}
\gamma_0(\alpha_{y^+_0}) &= \alpha_{y^-_{0}}; &
\gamma_{x^+_i}(\alpha_{y^+_i}) &= \alpha_{y^+_i} \alpha_{y^-_{i}} \alpha_{y^+_i}^{-1} 
\ (i=1,\ldots , n-2); \label{2018.5.12.13.30}\\
\gamma_0(\alpha_{0}) &= \alpha_{0};   &
\gamma_0(\alpha_{y^+_i})& = \alpha_{y^+_{i}}   
\ (i=1,\dots,n-2) ; \label{2019.1.21.16.48} \\
\tilde{\gamma}_{\infty}(\alpha_{y^+_0}) &=  \alpha_{0}
\alpha_{y^-_0} \alpha_{0}^{-1}; \label{2018.5.12.13.28} 
\end{align}
and
\begin{equation}\label{2019.1.12.16.51}
\begin{aligned}
\gamma_1(\alpha_{y^+_0}) &= ( \alpha_0 \alpha_{y^+_0}) \alpha_0   
\alpha_{y^+_0}\alpha_0^{-1} (\alpha_{0} \alpha_{y^+_0})^{-1}  .
\end{aligned}
\end{equation}
Here we put
 $\tilde{\gamma}_{\infty}:= \gamma_{x^-_{j_1}} \cdots \gamma_{x^-_{j_{n-2}}} \gamma_{\infty}$.
In fact, if $a$ varies along the loop $\gamma_0$,
then $y_0^+$ moves to the location of $y_0^-$,
and $0$ and $y_i^+$ ($i=1,\ldots,n-2$) go back to the
prior locations, respectively.
If $a$ varies along the loop $\gamma_{x^{+}_i}$ for $i=1,\ldots,n-2$,
then $y_i^+$ moves to the location of $y_i^-$.
Here we assume that $-1 \ll \mathrm{Arg}\, (a)<0$
and $y_0^+$ closes to $0$ when $a$ approach $1$ along the real axis.
If $a$ varies along the loop $\tilde{\gamma}_{\infty}$,
then $y_0^+$ moves to the location of $y_0^-$ round by $0$.
If $a$ varies along the loop $\gamma_1$, 
then $y_0^+$ go back to the prior locations round by $0$ twice. 
If we consider continuous deformations of the corresponding loops 
according to the motion of these points, 
then we have the equalities (\ref{2018.5.12.13.30}), (\ref{2019.1.21.16.48}),
(\ref{2018.5.12.13.28}) and (\ref{2019.1.12.16.51}).
Here note that the images 
of the intersection of $\mathcal{Q}_i$ and $\mathcal{Q}_j$ under $\pi$ 
are close to $\infty$ for $i,j =0,1, \ldots, n-2$
since $0<|a_i|\ll 1$ and $ 0 < |b_i | \ll 1$.

In $\pi_1(\mathbb{P}^2 \setminus \tilde{D}_n,*) $,
we have the equality
$\gamma_x(\alpha) = \gamma_x^{-1} \alpha \gamma_x$ for $\alpha \in \Gamma$.
By the equality (\ref{2018.5.12.13.30}), we can show that
 $\alpha_{y_i^-}$ ($i=0,\ldots,n-2$) are generated by 
$\alpha_{y^+_0} ,\ldots , \alpha_{y^+_{n-2}}$, and $\gamma_0$
in $\pi_1(\mathbb{P}^2 \setminus \tilde{D}_n,*)$.
Then we obtain the following proposition.

\begin{prop}
The group $\pi_1(\mathbb{P}^2 \setminus \tilde{D}_n,*)$ is generated by
$\alpha_0, \alpha_{y^+_0} ,\ldots , \alpha_{y^+_{n-2}}$, and
$\gamma_0$.
\end{prop}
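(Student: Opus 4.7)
My plan is to combine the Zariski--Van Kampen exact sequence of Section~\ref{2019.7.10.23.33} with the explicit conjugation formulas~(\ref{2018.5.12.13.30}) and the Zariski--Van Kampen monodromy relation at the tangency critical values of the pencil. From the exact sequence
\[
1 \longrightarrow \Gamma \longrightarrow \pi_1(\mathbb{P}^2 \setminus \tilde{D}_n,*) \longrightarrow \pi_1(\mathbb{P}^1\setminus\{0,\infty\},a) \longrightarrow 1
\]
and the fact that $\pi_1(\mathbb{P}^1\setminus\{0,\infty\},a)\cong\mathbb{Z}$ is generated by $\gamma_0$, the full group $\pi_1(\mathbb{P}^2\setminus\tilde{D}_n,*)$ is generated by the generators of $\Gamma$ --- namely $\alpha_0,\alpha_{y^+_0},\ldots,\alpha_{y^+_{n-2}},\alpha_{y^-_0},\ldots,\alpha_{y^-_{n-2}},\alpha_\infty$ --- together with a section-lift of $\gamma_0$. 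Let $H$ denote the subgroup generated by $\alpha_0,\alpha_{y^+_0},\ldots,\alpha_{y^+_{n-2}},\gamma_0$; it then suffices to show that $\alpha_\infty$ and every $\alpha_{y^-_j}$ lie in $H$.

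The generator $\alpha_\infty$ is expressed via the defining product relation of $\Gamma$ as a word in the other $\alpha$-generators, so $\alpha_\infty\in H$ provided every $\alpha_{y^-_j}\in H$. For $j=0$, the first identity in~(\ref{2018.5.12.13.30}) together with the equality $\gamma_x(\alpha)=\gamma_x^{-1}\alpha\gamma_x$ in $\pi_1(\mathbb{P}^2\setminus\tilde{D}_n,*)$ yields $\alpha_{y^-_0} = \gamma_0^{-1}\alpha_{y^+_0}\gamma_0$, which lies in $H$ directly.

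For $\alpha_{y^-_i}$ with $i\ge 1$ I would invoke the Zariski--Van Kampen relation at the tangency critical value $x^+_i$: since $x^+_i\notin\{0,\infty\}$ and the fiber $\pi^{-1}(x^+_i)$ lies inside $\mathbb{P}^2\setminus\tilde{D}_n$, one may adjoin $x^+_i$ back into the base of the fibration, at the cost of imposing the relation $\alpha_{y^+_i} = \gamma_{x^+_i}(\alpha_{y^+_i})$ in $\pi_1(\mathbb{P}^2\setminus\tilde{D}_n,*)$. Substituting the second half of~(\ref{2018.5.12.13.30}) into this relation gives
\[
\alpha_{y^+_i} \;=\; \alpha_{y^+_i}\,\alpha_{y^-_i}\,\alpha_{y^+_i}^{-1},
\]
which collapses at once to $\alpha_{y^-_i} = \alpha_{y^+_i}\in H$. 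The main obstacle is this last step: one must justify the Zariski--Van Kampen relation $\alpha_{y^+_i} = \gamma_{x^+_i}(\alpha_{y^+_i})$. This holds because around $x^+_i$ the pencil $\pi$ is simply tangent to the irreducible component $\tilde{\mathcal{Q}}_i$, so a small disc around $x^+_i$ in the base pulls back to a neighborhood of the tangency point that is a disc in $\mathbb{P}^2\setminus\tilde{D}_n$, yielding a nullhomotopy of the section-lift of $\gamma_{x^+_i}$ relative to the fiber generator $\alpha_{y^+_i}$. Combined with the earlier reductions, this shows $\pi_1(\mathbb{P}^2\setminus\tilde{D}_n,*)$ is generated by $\alpha_0,\alpha_{y^+_0},\ldots,\alpha_{y^+_{n-2}},\gamma_0$, as asserted.
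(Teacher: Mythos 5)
Your argument is correct and follows essentially the same route as the paper: the exact sequence reduces the problem to generating $\Gamma$ together with a lift of $\gamma_0$, the product relation in $\Gamma$ eliminates $\alpha_\infty$, the first identity in (\ref{2018.5.12.13.30}) gives $\alpha_{y^-_0}=\gamma_0^{-1}\alpha_{y^+_0}\gamma_0$, and the Zariski--Van Kampen relation $\gamma_{x^+_i}(\alpha_{y^+_i})=\alpha_{y^+_i}$ at the tangency values forces $\alpha_{y^-_i}=\alpha_{y^+_i}$ --- which is precisely the content packed into the paper's one-sentence derivation preceding the proposition. The only imprecision is your description of the null-homotopy of $s_*(\gamma_{x^+_i})$: it bounds the image under the section $s$ of the disc around $x^+_i$ (that disc lies in the domain $\mathbb{P}^1\setminus\{0,\infty\}$ of $s$), rather than a disc near the tangency point, which itself lies on $\tilde{\mathcal{Q}}_i$ and hence outside the complement.
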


\begin{prop}
Set $\tilde{\alpha}= \alpha_{y^+_1} \cdots \alpha_{y^+_{n-2}}$.
For the elements $\alpha_{y^+_0}, \alpha_0,\gamma_{0}$, and $\tilde{\alpha}$ 
of $\pi_1(\mathbb{P}^2 \setminus \tilde{D}_n,*)$,
we have the following equalities:
\begin{align}
\ [ \alpha_0 , \gamma_{0} ]&= [ \tilde{\alpha}, \gamma_0 ]=1 \label{2018.5.10.22.52}, \\
((\gamma_{0}\tilde{\alpha} ) \alpha_{y^+_0})^2&=
(\alpha_{y^+_0}(\gamma_{0}
 \tilde{\alpha} ))^2, \label{2018.5.10.22.54} \\
(\alpha_{y^+_0} \alpha_0)^2 &= ( \alpha_0 \alpha_{y^+_0})^2 \label{2018.5.12.13.38}.
\end{align}
\end{prop}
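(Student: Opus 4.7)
The plan is to derive each of the three equalities from the fundamental Zariski--Van Kampen identity $\gamma_x^{-1} \alpha \gamma_x = \gamma_x(\alpha)$ in $\pi_1(\mathbb{P}^2 \setminus \tilde{D}_n, *)$, combined with the explicit monodromy computations (\ref{2018.5.12.13.30})--(\ref{2019.1.12.16.51}). For (\ref{2018.5.10.22.52}), the proof will be immediate: (\ref{2019.1.21.16.48}) states that $\gamma_0(\alpha_0) = \alpha_0$, which via the identity becomes $[\alpha_0, \gamma_0] = 1$; likewise $\gamma_0(\alpha_{y_i^+}) = \alpha_{y_i^+}$ gives $[\alpha_{y_i^+}, \gamma_0] = 1$ for each $i = 1, \ldots, n-2$. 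Since $\gamma_0$ commutes with every factor of $\tilde{\alpha} = \alpha_{y_1^+} \cdots \alpha_{y_{n-2}^+}$, it commutes with the product, yielding $[\tilde{\alpha}, \gamma_0] = 1$.

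For the braid relation (\ref{2018.5.12.13.38}), the geometric source is the tangency between the line $(y=0)$ and the conic $\tilde{\mathcal{Q}}_0$ at $[1:0:1]$, whose image under $\pi$ is the critical base value $x = 1$. From (\ref{2019.1.12.16.51}) and the Van Kampen identity I obtain
\[
\gamma_1^{-1} \alpha_{y_0^+} \gamma_1 = (\alpha_0 \alpha_{y_0^+}) \alpha_0 \alpha_{y_0^+} \alpha_0^{-1} (\alpha_0 \alpha_{y_0^+})^{-1}.
\]
Because $\gamma_1$ is contractible in $\mathbb{P}^1 \setminus \{0, \infty\}$, the lift $s_*(\gamma_1)$ lies in $\Gamma$, and the local geometry of the tangency at $[1:0:1]$ forces it to be a specific word in $\alpha_0$ and $\alpha_{y_0^+}$ (the local full twist at the coalescing double point). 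Substituting this word for $\gamma_1$ into the relation above and simplifying will yield $(\alpha_0 \alpha_{y_0^+})^2 = (\alpha_{y_0^+} \alpha_0)^2$.

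For (\ref{2018.5.10.22.54}) the same philosophy applies but at $\tilde{\gamma}_\infty$. Starting from (\ref{2018.5.12.13.28}) and substituting $\alpha_{y_0^-} = \gamma_0^{-1} \alpha_{y_0^+} \gamma_0$ (from (\ref{2018.5.12.13.30})), the Van Kampen identity becomes
\[
\tilde{\gamma}_\infty^{-1} \alpha_{y_0^+} \tilde{\gamma}_\infty = \alpha_0 \gamma_0^{-1} \alpha_{y_0^+} \gamma_0 \alpha_0^{-1}.
\]
I then identify $s_*(\tilde{\gamma}_\infty)$ as an explicit word involving $\alpha_0$, $\gamma_0$ and $\tilde{\alpha}$, exploiting that the fiber over $\infty$ collapses all the tangencies of $\tilde{\mathcal{Q}}_0$ with $(t=0)$ and with each $\tilde{\mathcal{Q}}_i$ at the same base critical value $[0:1:0]$; after applying the commutations in (\ref{2018.5.10.22.52}), the resulting equation should reduce to the claimed braid relation between $\alpha_{y_0^+}$ and $\gamma_0 \tilde{\alpha}$. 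The hardest step will be this identification of $s_*(\tilde{\gamma}_\infty)$: one must track the simultaneous motion of all the intersection points $y_i^{\pm}$ ($i = 0, \ldots, n-2$) as the base point traverses $\tilde{\gamma}_\infty$ and decompose the resulting fiber braid into half-twists that cleanly separate into a $\gamma_0$-contribution and a $\tilde{\alpha}$-contribution, which is where the combinatorics of the multiple coalescing tangencies at $[0:1:0]$ has to be handled with care.
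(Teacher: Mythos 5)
Your handling of (\ref{2018.5.10.22.52}) coincides with the paper's, and your overall strategy (feed the explicit monodromy actions into $\gamma_x^{-1}\alpha\gamma_x=\gamma_x(\alpha)$) is the right one; the gaps are in how you evaluate the lifted base loops. For (\ref{2018.5.12.13.38}) you propose to identify $s_*(\gamma_1)$ with ``the local full twist at the coalescing double point'' and substitute that word into the conjugation identity. This conflates the braid monodromy action of $\gamma_1$ (which is indeed a full twist, and which is exactly what already produces the right-hand side of (\ref{2019.1.12.16.51})) with the element $s_*(\gamma_1)$ itself. Since $\gamma_1$ bounds a disc in $\mathbb{P}^1\setminus\{0,\infty\}$ over which the continuous section $s$ is defined inside $\mathbb{P}^2\setminus\tilde{D}_n$, the element $s_*(\gamma_1)$ is \emph{trivial}. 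Hence the correct deduction is simply $\gamma_1(\alpha_{y_0^+})=\alpha_{y_0^+}$, which together with (\ref{2019.1.12.16.51}) gives $(\alpha_{y_0^+}\alpha_0)^2=(\alpha_0\alpha_{y_0^+})^2$ immediately; if you instead substitute a nontrivial full twist such as $(\alpha_0\alpha_{y_0^+})^2$ for $s_*(\gamma_1)$, the resulting relation is a different, weaker one and the computation does not close.

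For (\ref{2018.5.10.22.54}) your starting identity $\tilde{\gamma}_\infty^{-1}\alpha_{y_0^+}\tilde{\gamma}_\infty=\alpha_0\gamma_0^{-1}\alpha_{y_0^+}\gamma_0\alpha_0^{-1}$ is correct, but the entire content of the proof is the ``identification of $s_*(\tilde{\gamma}_\infty)$'' that you defer to a delicate braid-tracking argument. The paper does not redo any such geometric computation: it obtains $\gamma_\infty$ algebraically from two inputs your proposal never invokes, namely the relation $\alpha_\infty=\gamma_\infty\gamma_0$ (the fiber loop $\alpha_\infty$ encircles the point where the two vertical lines $(x=0)$ and $(t=0)$ meet, so it is the product of their meridians) and the surface relation $\alpha_0\alpha_{y_1^+}\cdots\alpha_{y_{n-2}^+}\alpha_{y_0^-}\cdots\alpha_{y_{n-2}^-}\alpha_{y_0^+}\alpha_\infty=1$ defining $\Gamma$, which give $\gamma_\infty\alpha_0$ as an explicit word in the fiber generators. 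Combined with $\alpha_{y_i^-}=\alpha_{y_i^+}$ for $i\ge 1$ (from (\ref{2018.5.12.13.30}) and the triviality of $s_*(\gamma_{x_i^+})$) and $\alpha_{y_0^-}=\gamma_0^{-1}\alpha_{y_0^+}\gamma_0$, plus the commutations (\ref{2018.5.10.22.52}), this reduces your identity to the statement that $\alpha_{y_0^+}$ conjugated by $\tilde{\alpha}\alpha_{y_0^+}\gamma_0\tilde{\alpha}\alpha_{y_0^+}$ equals $\gamma_0^{-1}\alpha_{y_0^+}\gamma_0$, which is (\ref{2018.5.10.22.54}). Without the relation $\alpha_\infty=\gamma_\infty\gamma_0$ and the product relation of $\Gamma$, your argument has no mechanism for rewriting conjugation by $\tilde{\gamma}_\infty$ as a word in $\alpha_0,\gamma_0,\tilde{\alpha},\alpha_{y_0^+}$, so as written the proof of the second equality is incomplete.
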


\begin{proof}\label{2018.5.12.16.43}
By the equality (\ref{2019.1.21.16.48}),
we have the equality (\ref{2018.5.10.22.52}).
Second, we show the equality (\ref{2018.5.10.22.54}).
By the equalities
(\ref{2018.5.12.13.30}), (\ref{2018.5.12.13.28}), 
$a_{\infty}=\gamma_{\infty}\gamma_0$, and (\ref{2018.5.10.22.52}),
we have
\begin{equation*}
\begin{aligned}
a_{y^+_0}&=  \gamma_{\infty} \alpha_{0}
\gamma_0^{-1}\alpha_{y^+_0}\gamma_0
\alpha_{0}^{-1} \gamma_{\infty}^{-1} \\
&= \alpha_{\infty} \alpha_{0}
\gamma^{-1}_{0}\gamma_0^{-1}\alpha_{y^+_0}\gamma_0\gamma_{0}
\alpha_{0}^{-1}\alpha_{\infty}^{-1} \\
&=
(\alpha_{y^+_1} \cdots \alpha_{y^+_{n-2}} \alpha_{y^-_0}
 \alpha_{y^-_1} \cdots \alpha_{y^-_{n-2}}  \alpha_{y^+_0})^{-1}
\gamma_{0}^{-1} \gamma_0^{-1}\alpha_{y^+_0}\gamma_0\gamma_0
(\alpha_{y^+_1} \cdots \alpha_{y^+_{n-2}} \alpha_{y^-_0}
 \alpha_{y^-_1} \cdots \alpha_{y^-_{n-2}} \alpha_{y^+_0})
 \\
 &=
( \tilde{\alpha} \gamma_{0}^{-1} \alpha_{y^+_0} \gamma_0
 \tilde{\alpha} \alpha_{y^+_0})^{-1}
\gamma_{0}^{-1}\gamma_0^{-1}\alpha_{y^+_0}\gamma_0\gamma_{0}
( \tilde{\alpha} \gamma_{0}^{-1} \alpha_{y^+_0} \gamma_0
 \tilde{\alpha} \alpha_{y^+_0})\\
  &=
( \tilde{\alpha}  \alpha_{y^+_0} \gamma_0
 \tilde{\alpha} \alpha_{y^+_0})^{-1}
\gamma_{0}^{-1}\alpha_{y^+_0}\gamma_0
( \tilde{\alpha}  \alpha_{y^+_0} \gamma_0
 \tilde{\alpha} \alpha_{y^+_0}).
\end{aligned}
\end{equation*}
Then we have the equality (\ref{2018.5.10.22.54}).
By the equality (\ref{2019.1.12.16.51}), we have the equality (\ref{2018.5.12.13.38}).
\end{proof}

\subsection{Monodromy representation of $\nabla_{\boldsymbol{\lambda}}$}\label{2019.7.10.22.02}

Let $\boldsymbol{D}_{\infty}$ be the infinite dihedral group:
\begin{equation*}
\boldsymbol{D}_{\infty}:= \left\langle
\begin{pmatrix}
0 & \alpha \\
-\alpha^{-1} & 0
\end{pmatrix},
\begin{pmatrix}
\beta & 0 \\
0 & \beta^{-1}
\end{pmatrix}\ 
\middle| \ 
\alpha , \beta \in \mathbb{C}^*
 \right\rangle \le \mathrm{SL}_2 (\mathbb{C}).
\end{equation*}

\begin{prop}\label{2018.5.8.15.13}
For generic $\boldsymbol{\lambda}$, the monodromy representation of 
$\nabla_{\boldsymbol{\lambda}}$ is conjugated to 
the dihedral representation $\rho_{\boldsymbol{\lambda}} \colon 
\pi_1(\mathbb{P}^n \setminus D_n,*) \ra \boldsymbol{D}_{\infty}$ of 
the fundamental group $\pi_1(\mathbb{P}^n \setminus D_n,*) $ 
defined by
\begin{equation*}
\begin{aligned}
\rho_{\boldsymbol{\lambda}}(\alpha_0)&= 
\begin{pmatrix}
-\exp (- \pi \lambda_{0} ) & 0 \\
0 & -\exp ( \pi \lambda_{0} )
\end{pmatrix}, &
\rho_{\boldsymbol{\lambda}}(\gamma_0)&=\begin{pmatrix}
\exp (- \pi \lambda_1 ) & 0 \\
0 & \exp ( \pi \lambda_1 )
\end{pmatrix}, \\
\rho_{\boldsymbol{\lambda}}(\alpha_{y^+_0})&=
\begin{pmatrix}
0 & 1 \\
-1 & 0
\end{pmatrix}, &
\rho_{\boldsymbol{\lambda}}(\alpha_{y^+_i})&=
\begin{pmatrix}
\exp (- \pi \lambda_{i+1} ) & 0 \\
0 & \exp ( \pi \lambda_{i+1} )
\end{pmatrix}, 
\end{aligned}
\end{equation*}
where $i=1,\ldots , n-2$.
\end{prop}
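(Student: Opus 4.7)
The plan is to compute the monodromy of $\nabla_{\boldsymbol{\lambda}}$ by exploiting Proposition~\ref{2019.7.12.12.28}: the pull-back $f^*\nabla_{\boldsymbol{\lambda}}$ is birationally equivalent to the split connection $(\nabla_0)_{\boldsymbol{\lambda}} \otimes \nabla_0' \otimes f^*\nabla_1'$. Upstairs everything is a tensor product of rank one connections given by closed logarithmic 1-forms, so the monodromy is readily diagonal on $Y$, and the dihedral behavior downstairs will come entirely from the Galois involution $\iota$ of the degree $2$ cover $f$. By the Zariski hyperplane theorem of Section~\ref{2019.7.10.23.32}, it suffices to work on $\mathbb{P}^2 \cap(\cap_i H_i)$ and verify the images of the generators $\alpha_0$, $\gamma_0$, $\alpha_{y_0^+}$, $\alpha_{y_i^+}$ exhibited by the Zariski--Van-Kampen analysis.

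First, I would compute the monodromy of $(\nabla_0)_{\boldsymbol{\lambda}}$ along every irreducible component of its polar divisor on $Y$ by reading off the residues of $\tfrac{1}{2}(\omega_0+\psi_n)$: it is diagonal with entries $\exp(\pm \pi\sqrt{-1}\lambda_0)$ around $\{u_j=0\}$, $\exp(\pm\pi\sqrt{-1}\lambda_1)$ around $\{u_j=1\}$, and $\exp(\pm\pi\sqrt{-1}\lambda_{i+1})$ around $\{u_0-u_1 = \pm z_i\}$. The rank one factors $\nabla_0'$ and $f^*\nabla_1'$ contribute scalar half-twists $\pm 1$ around $\{u_0=u_1\}$ and $\{y=0\}$ respectively, which will show up as the extra signs $-1$ in front of $\exp(\pm\pi\sqrt{-1}\lambda_0)$ in the image of $\alpha_0$ (the loop around $y=0=s_2$).

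Next I would identify the ramification of $f$: the Galois cover with involution $\iota\colon u_0\leftrightarrow u_1$ is branched exactly along the conic $\mathcal{Q}_0=\{f(x,y,t)=0\}$, which is the image of $\{u_0=u_1\}$, while over $\mathcal{Q}_i$ ($i\ge 1$) the cover is \'etale since $\{u_0-u_1=z_i\}$ and $\{u_0-u_1=-z_i\}$ are two disjoint branches. Consequently, meridians around the divisors $(x=0)$, $(y=0)$, $(t=0)$ and $\mathcal{Q}_i$ ($i\ge 1$) lift to genuine loops in $Y$, and their $\nabla_{\boldsymbol{\lambda}}$-monodromy equals the corresponding upstairs diagonal monodromy transported through the birational gauges $\Phi$ and (\ref{2018.4.13.16.49}). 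This directly yields the claimed diagonal matrices for $\rho_{\boldsymbol{\lambda}}(\alpha_0)$, $\rho_{\boldsymbol{\lambda}}(\gamma_0)$ and $\rho_{\boldsymbol{\lambda}}(\alpha_{y_i^+})$ ($i\ge 1$). A meridian $\alpha_{y_0^+}$ around the branch divisor $\mathcal{Q}_0$, on the other hand, lifts to a path on $Y$ from one sheet to the other, so the monodromy of $\nabla_{\boldsymbol{\lambda}}$ along it is the composition of the (trivial, since no pole is present upstairs at $\{u_0=u_1\}$) transport of $(\nabla_0)_{\boldsymbol{\lambda}}$ with the transition isomorphism induced by $\iota$; in the original diagonal frame $\iota$ acts projectively by $[w_0^0:w_0^1]\mapsto [w_0^1:w_0^0]$, whose $\mathrm{SL}_2$-representative landing in $\boldsymbol{D}_{\infty}$ is $\left(\begin{smallmatrix}0&1\\-1&0\end{smallmatrix}\right)$.

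The remaining task is to check consistency: the matrices produced must satisfy the relations (\ref{2018.5.10.22.52}), (\ref{2018.5.10.22.54}), (\ref{2018.5.12.13.38}) derived in the previous subsection, which is a direct computation in $\boldsymbol{D}_{\infty}$ (the two diagonal matrices commute, and a dihedral conjugation relation such as $(\alpha_0\alpha_{y_0^+})^2 = (\alpha_{y_0^+}\alpha_0)^2$ reduces to the obvious identity $\left(\begin{smallmatrix}-1&0\\0&-1\end{smallmatrix}\right) = \left(\begin{smallmatrix}-1&0\\0&-1\end{smallmatrix}\right)$). The genuine obstacle is bookkeeping the normalizing cocycle of the descent: one must pin down which lift of the base point to $Y$ is used, track how the birational gauges $M_1$ and $M_2$ of Sections~\ref{2019.7.10.23.01}--\ref{2019.7.10.22.00} transport the diagonal upstairs frame of $(\nabla_0)_{\boldsymbol{\lambda}}$ to the trivial frame of $\nabla_{\boldsymbol{\lambda}}$ downstairs, and verify that the involution class around $\mathcal{Q}_0$ lands precisely on $\left(\begin{smallmatrix}0&1\\-1&0\end{smallmatrix}\right)$ rather than on a conjugate representative. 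Absorbing that ambiguity into an overall conjugation in $\mathrm{SL}_2(\mathbb{C})$, allowed by the statement, finishes the proof.
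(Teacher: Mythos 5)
Your proposal is correct in outline, but it reaches the two non‑obvious generators by a genuinely different route than the paper. For the ``split'' generators $\gamma_0$ and $\alpha_{y_i^+}$ ($i\ge 1$) both arguments coincide: the paper also observes that over a simply connected set avoiding $\tilde{\mathcal{Q}}_0\cup(y=0)\cup(t=0)$ the connection is isomorphic to $(\nabla_0)_{\boldsymbol{\lambda}}$, so these monodromies are simultaneously diagonal with the residual exponents. The divergence is in how $\rho_{\boldsymbol{\lambda}}(\alpha_0)$ and especially $\rho_{\boldsymbol{\lambda}}(\alpha_{y_0^+})$ are pinned down. You compute them geometrically: meridians of the unramified components lift to loops on $Y$ where everything is diagonal (with the scalar half‑twists from $\nabla_0'$ and $f^*\nabla_1'$ accounting for the signs in $\rho_{\boldsymbol{\lambda}}(\alpha_0)$), while a meridian of the branch conic $\mathcal{Q}_0$ lifts to a path between the two sheets, so its monodromy is forced to be antidiagonal by the deck involution $w_0\mapsto 1/w_0$, and any antidiagonal element of $\mathrm{SL}_2(\mathbb{C})$ can be normalized to $\left(\begin{smallmatrix}0&1\\-1&0\end{smallmatrix}\right)$ by a diagonal conjugation that fixes the other generators. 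The paper instead makes no use of the covering at this stage: it writes $A_{y_0^+}$ as an unknown matrix with normalized lower‑left entry and extracts $a_{11}=a_{22}=0$, $a_{12}=1$ from the Zariski--Van Kampen relations (\ref{2018.5.10.22.52})--(\ref{2018.5.12.13.38}) together with explicit genericity hypotheses on $\boldsymbol{\lambda}$ forcing $(A_{y_0^+}A_0)^2=(A_{y_0^+}C_0\tilde A)^2=-I_2$; similarly $A_0$ is shown diagonal from $[A_0,C_0]=1$. Your route buys a conceptual explanation of \emph{why} the representation is dihedral (the antidiagonal matrix is the deck transformation) and uses the relations only as a sanity check, at the cost of the gauge bookkeeping through $\Phi$, $M_1$, $M_2$ that you flag but do not carry out; the paper's route trades that bookkeeping for a purely group‑theoretic argument that leans on the relations and on the genericity assumptions already present in the statement. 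Both are complete proofs once their respective loose ends (your descent cocycle; the paper's residual‑exponent computation for the diagonal entries of $A_0$) are filled in.
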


\begin{proof}
Let $\rho_{\nabla_{\boldsymbol{\lambda}}}\colon 
\pi_1(\mathbb{P}^n \setminus D_n,*) \ra \mathrm{SL}_{2}(\mathbb{C})$ be
a monodromy representation of $\nabla_{\boldsymbol{\lambda}}$.
Put $A_0 := \rho_{\nabla_{\boldsymbol{\lambda}}}(\alpha_0)$, 
$A_{y^+_i} := \rho_{\nabla_{\boldsymbol{\lambda}}}(\alpha_{y^+_i})$ 
($i=0,\ldots,n-1$) and $C_0 := \rho_{\nabla_{\boldsymbol{\lambda}}}(\gamma_0)$.
Let $U$ be some analytic open subset of 
$\mathbb{P}^2 \setminus (\tilde{\cQ}_0 \cup (y=0)\cup (t=0))$
such that $U$ is simply connected and $U$ contains the loops $\alpha_{y_i^+}$ ($i=1,\ldots,n-1$)
and $\gamma_0$. 
On the open subset $U$, the connection $\nabla_{\boldsymbol{\lambda}}$ 
is isomorphic to $(\nabla_0)_{\boldsymbol{\lambda}}$. 
Then by some conjugation, we may put
\begin{equation*}
C_0=
\begin{pmatrix}
\exp (- \pi \lambda_1 ) & 0 \\
0 & \exp ( \pi \lambda_1 )
\end{pmatrix}, \quad 
A_{y^+_i}=
\begin{pmatrix}
\exp (- \pi \lambda_{i+1} ) & 0 \\
0 & \exp ( \pi \lambda_{i+1} )
\end{pmatrix}\  i=1,\ldots , n-2.
\end{equation*}
Assume that $\exp (- \pi \lambda_1 ) \neq \exp ( \pi \lambda_1 )$.
By Proposition \ref{2018.5.12.16.43}, we have the equality $A_0 C_0= C_0A_0$.
Then we have
\begin{equation*}
A_{0}=
\begin{pmatrix}
-\exp (- \pi \lambda_{0} ) & 0 \\
0 & -\exp ( \pi \lambda_{0} )
\end{pmatrix}.
\end{equation*}

Note that the image $\mathrm{Im}(\rho_{\nabla_{\boldsymbol{\lambda}}})$ is non-abelian.
Since $C_0$, $A_0$, and $A_{y_i^+}$ ($i=1,\ldots, n-2$) are diagonal matrices,
we may put
\begin{equation*}
A_{y^+_0}=
\begin{pmatrix}
a_{11} & a_{12} \\
-1 & a_{22}
\end{pmatrix}.
\end{equation*}
Put $\tilde{A}:=A_{y^+_1} \cdots A_{y^+_{n-2}}$.
By Proposition \ref{2018.5.12.16.43}, we have the equalities 
$(A_{y^+_0} (C_0 \tilde{A}) )^2=
((C_0 \tilde{A} )A_{y^+_0})^{2}$
and
$(A_{y^+_0} A_0 )^2=(A_0A_{y^+_0})^{2}$.
Assume that $(\exp (-\pi \lambda_1 - \pi \sum_{i=1}^{n-2} \lambda_{i+1}  ) )^2 \neq 1$ 
and $ (- \exp (-\pi \lambda_0 ) )^2 \neq 1 $.
Since $A_{y^+_0} (C_0 \tilde{A})\neq  (C_0 \tilde{A})A_{y^+_0}$ 
and $A_{y^+_0} A_0 \neq  A_0A_{y^+_0}$, we have the equalities 
$(A_{y^+_0}(C_0 \tilde{A} ))^{2}= - I_2$ 
and $(A_{y^+_0}A_0)^{2} = - I_2$.
Then we have the following equalities:
\begin{equation*}
\begin{cases}
a_{11}a_{22}+a_{12}=1 \\
a_{11} (\exp (-\pi \lambda_1 - \pi \sum_{i=1}^{n-2} \lambda_{i+1}  ) )^2 = a_{22} \\
a_{11} (- \exp (-\pi \lambda_0 ) )^2 = a_{22}.
\end{cases}
\end{equation*}
We assume that 
$(\exp (-\pi \lambda_1 - \pi \sum_{i=1}^{n-2} \lambda_{i+1}  ) )^2 \neq (- \exp (-\pi \lambda_0 ) )^2$.
Then we have $a_{11}=0$, $a_{12}=1$, and $a_{22}=0$.
\end{proof}

\begin{proof}[Proof of Theorem \ref{2019.7.10.22.03}]
By Theorem \ref{2019.7.10.15.33},
we have that the monodromy representation of 
$\nabla_{\boldsymbol{\lambda}}$ 
is  virtually abelian.
By Proposition \ref{2018.5.8.15.13},
we have that the monodromy representation of 
$\nabla_{\boldsymbol{\lambda}}$ 
is conjugated to the explicit
representation 
\begin{equation*}
\rho_{\boldsymbol{\lambda}} \colon \pi_1 (\mathbb{P}^n \setminus D_n) 
\longrightarrow 
\mathrm{SL}_2(\mathbb{C}),
\end{equation*}
which takes values in the infinite dihedral group $\boldsymbol{D}_{\infty}$.
\end{proof}

\section{Algebraic Garnier solution}\label{2018.5.18.17.17}

Assume that an $n$-tuple of
complex numbers $\boldsymbol{\lambda}=(\lambda_0, \ldots, \lambda_{n-1} )$
is sufficiently generic.
In this section, we restrict the flat connection $\nabla_{\boldsymbol{\lambda}}$ to 
a generic line $\mathbb{P}^n\cap (\cap_{i=0}^{n-2} H'_i)$, where
\begin{equation}\label{2018.5.3.19.42}
\begin{cases}
H'_0 = (y-ax -b t=0) &\\
H'_i =(z_i - c_{i} x - d_i t=0) & (i=1,2,\dots,n-2).
\end{cases}
\end{equation}
Here $a, b, c_i $, and $d_i$ $(i=1,2,\dots,n-2)$ are generic complex numbers.
We consider the transformation $\tilde{x}= -\frac{a}{b}x$.
Let $T$ be a Zariski open subset of $\Spec \mathbb{C} [a,b, c_i ,d_i ]_{i=1,\ldots,n-2}$.
We consider the map $\mathbb{P}^1 \times T \ra \mathbb{P}^n$ defined by (\ref{2018.5.3.19.42}).
Let $(\nabla_{\mathbb{P}^1 \times T})_{\boldsymbol{\lambda}}$ be the flat connection 
on the trivial rank $2$ vector bundle $F_0$ over $\mathbb{P}^1 \times T$ 
induced by the flat connection $\nabla_{\boldsymbol{\lambda}}$ over $\mathbb{P}^n$.
Let 
\begin{equation*}
(\nabla_{\mathbb{P}^1 \times T/T})_{\boldsymbol{\lambda}}
\colon F_0 \lra F_0 \otimes \Omega^1_{\mathbb{P}^1 \times T/T}(D_n)
\end{equation*}
 be the relative connection 
on $F_0$ over $\mathbb{P}^1 \times T$ 
associated to $(\nabla_{\mathbb{P}^1 \times T})_{\boldsymbol{\lambda}}$.
In Section \ref{2019.7.10.22.11}, 
we introduce an \'etale base change $\tilde{T} \rightarrow T$
to prove the assertion (i) of Theorem \ref{2019.7.10.22.13}.
In Section \ref{2019.7.12.11.27},  
after the \'etale base change $\tilde{T} \rightarrow T$,
we compute the residue matrix of 
$(\nabla_{\mathbb{P}^1 \times \tilde{T}/\tilde{T}})_{\boldsymbol{\lambda}}$ 
for each simple pole.
In Section \ref{2019.7.10.22.15}, 
we recall the relation between isomonodromic deformations and the Garnier system
following \cite{Mazz}.
In Section \ref{2019.7.12.11.28},
we show Theorem \ref{2019.7.10.22.13}.

\subsection{Regular singular points of 
$(\nabla_{\mathbb{P}^1 \times T/T})_{\boldsymbol{\lambda}}$}\label{2019.7.10.22.11}

By the pull-back of $x^2 + y^2 +t^2  -2 (x y + y t +t x )   $ 
and $x^2 + y^2 +t^2  -2 (x y + y t +t x ) -z_i^2  $
under $\mathbb{P}^1 \times T \ra \mathbb{P}^n$, we have the following polynomials over $T$:
\begin{equation*}
\begin{aligned}
f(a,b,\tilde{x}) &:=
\frac{(a-1)^2b^2}{a^2} \tilde{x}^2 + 
\frac{2b(1+a+b-ab)}{a} \tilde{x} + (b-1)^2, \\
f_i(a,b,c_i,d_i,\tilde{x}) &:=f(a,b,\tilde{x})
-\frac{(a d_i -b c_i \tilde{x})^2}{a^2},
\end{aligned}
\end{equation*}
which are described on the affine coordinate $[\tilde{x}:1]$.
Let $I$ be the ideal of $\mathbb{C} [a,b,\tilde{b}, c_i ,d_i , \tilde{d}_i]_{i=1,\ldots,n-2}$ defined by
$I:=
(\tilde{b}^2-4 (a+b-ab), 
\tilde{d}_i^2-\Delta^i_{\tilde{x}} )_{i=1,\ldots,n-2}$,
where $\Delta^i_{\tilde{x}} $ is the discriminant of $f_i(a,b,c_i,d_i,\tilde{x})$ with respect to $\tilde{x}$.
We have the natural morphism
\begin{equation*}
\Spec \mathbb{C} [a,b,\tilde{b}, c_i ,d_i , \tilde{d}_i]_{i=1,\ldots,n-2}
/ I \lra \Spec \mathbb{C} [a,b, c_i ,d_i ]_{i=1,\ldots,n-2}.
\end{equation*}
Let $\tilde{T}$ be the inverse image of $T$ under this morphism: $\tilde{T} \ra T$.
Let $t_{1}$ and $t_{2}$ be the rational functions on $\tilde{T}$ defined by
\begin{equation*}
t_{1}:=
\frac{a( \tilde{b}-2 )^2}{4(a-1)(\tilde{b}^2-a)} \text{  and  }
t_{2}:=
\frac{a(\tilde{b}+2 )^2}{4(a-1)(\tilde{b}^2-a)}.
\end{equation*}
Then $f(a,b,t_1)=f(a,b,t_2)=0$.
Moreover,
let $t_{2i+1}$ and $t_{2i+2}$ be the rational functions on $\tilde{T}$ defined by
\begin{equation*}
t_{2i+1}:=
\frac{2ab(1+a+b-ab- c_i d_i) -a^2  \tilde{d}_i}{2b^2((a-1)^2-c_i^2)} \text{  and  }
t_{2i+2}:=
\frac{2ab(1+a+b-ab- c_i d_i) +a^2  \tilde{d}_i}{2b^2((a-1)^2-c_i^2)}.
\end{equation*}
Then $ f_i (a,b,c_i,d_i,t_{2i+1})= f_i (a,b,c_i,d_i,t_{2i+2})=0$.
By these rational functions, we have a generically finite morphism
\begin{equation}\label{2018.5.12.23.51}
\tilde{T} \lra \Spec \mathbb{C}[t_1,t_2,\ldots ,t_{2n-2}],
\end{equation}
if the Zariski open subset $T$ shrinks.
We take the pull-back $(\nabla_{\mathbb{P}^1 \times \tilde{T}/\tilde{T}})_{\boldsymbol{\lambda}}$ 
of $(\nabla_{\mathbb{P}^1 \times T/T})_{\boldsymbol{\lambda}}$ 
under the morphism $\mathbb{P}^1 \times \tilde{T} \ra \mathbb{P}^1 \times T$.
Then $(\nabla_{\mathbb{P}^1 \times \tilde{T}/\tilde{T}})_{\boldsymbol{\lambda}}$ 
is a family of the
 Fuchsian systems with $2n + 1$ regular singularities 
 at $\tilde{x}=0,1,t_1,\ldots , t_{2n-2}, \infty$ parametrized by $\tilde{T}$.

\subsection{Residue matrices of 
$(\nabla_{\mathbb{P}^1 \times \tilde{T}/\tilde{T}})_{\boldsymbol{\lambda}}$ }\label{2019.7.12.11.27}

We describe the residue matrices of 
$(\nabla_{\mathbb{P}^1 \times \tilde{T}/\tilde{T}})_{\boldsymbol{\lambda}}$
at the regular singular points.
Put 
\begin{equation*}
\begin{aligned}
M_2(\tilde{x})&:=
\begin{pmatrix}
-ab (\tilde{x} -1) & - b \tilde{x} -a\\
0 &a
\end{pmatrix} \text{ and}\\
\alpha_0^i(\tilde{x})&:= \lambda_{i+1} \left(
- \frac{ b c_i}{a} -
\frac{ a d_i -b c_i  \tilde{x} }{2a(\tilde{x} -t_{2i+1})} 
-\frac{ a d_i -b c_i \tilde{x} }{2a(\tilde{x} -t_{2i+2} ) } \right).
\end{aligned}
\end{equation*}
Let $H^{\tilde{T}}_{2n-1}$ be
the residue matrix at $\tilde{x}=0$.
We have the following equality
\begin{equation*}
\begin{aligned}
H^{\tilde{T}}_{2n-1}&=M_2(0)^{-1}
\begin{pmatrix}
0 & \frac{\lambda_1(\tilde{b}^2 -4)}{8(a-1)} \\
 \frac{2\lambda_1(a-1)}{\tilde{b}^2 -4} & 0
\end{pmatrix}
M_2(0).
\end{aligned}
\end{equation*}
Let $H^{\tilde{T}}_{2n}$ be
the residue matrix at $\tilde{x}=1$.
We have the following equality
\begin{equation*}
\begin{aligned}
H_{2n}^{\tilde{T}}
&=
\frac{1-\lambda_0}{2}
\begin{pmatrix}
1 & \frac{2 (\tilde{b}^2+4a^2-8a)}{\tilde{b}^2-4a^2} \\
0 & -1
\end{pmatrix}
+ 
\sum^{n-2}_{i=1} 
 \alpha_0^i(1)
\begin{pmatrix}
0 &1  +  \frac{  ( a+b)^2}{b^2(a-1)^2(1 -t_1)(1 -t_2)}\\
0 & 0
\end{pmatrix}.
\end{aligned}
\end{equation*}
Let $H^{\tilde{T}}_{1}$ and $H^{\tilde{T}}_{2}$ be
the residue matrices  at $\tilde{x}=t_{1}$ 
and $\tilde{x}=t_{2}$, respectively.
We have the following equalities
\begin{equation*}
\begin{aligned}
H^{\tilde{T}}_{1}
&=M_2(t_{1})^{-1}
 \begin{pmatrix}
-\frac{1}{4} & 0 \\
-\frac{\lambda_0(a-1)}{2(\tilde{b} -2 a)}
-\frac{\lambda_1(a-1)}{2(\tilde{b}-2)}
+ \sum^{n-2}_{i=1} 
\frac{ a^2 \alpha^i_0(t_1)}{b^2(a-1)^2(t_1 -t_2)}
& \frac{1}{4}
\end{pmatrix} 
 M_2(t_{1}) \\
H^{\tilde{T}}_{2}
&=M_2(t_{2})^{-1} 
\begin{pmatrix}
-\frac{1}{4} & 0 \\
\frac{\lambda_0(a-1)}{2(\tilde{b} +2 a)}
+\frac{\lambda_1(a-1)}{2(\tilde{b}+2)}
+ \sum^{n-2}_{i=1}
\frac{ a^2 \alpha^i_0(t_2)}{b^2(a-1)^2(t_2 -t_1)}
&\frac{1}{4}
\end{pmatrix}
M_2(t_{2}).
\end{aligned}
\end{equation*}
Let $H^{\tilde{T}}_{2i+1}$ and $H^{\tilde{T}}_{2i+2}$ be
the residue matrices at $\tilde{x}=t_{2i+1}$ and $\tilde{x}=t_{2i+2}$, respectively.
We have the following equalities
\begin{equation*}
\begin{aligned}
H^{\tilde{T}}_{2i+1}&=
M_2(t_{2i+1})^{-1}
\begin{pmatrix}
0 & \frac{ \lambda_{i+1} (a d_i -b c_i t_{2i+1}) }{{2a}} \\
\frac{ \lambda_{i+1} a(-a d_i +b c_i t_{2i+1})}{{2b^2(a-1)^2 (t_{2i+1} -t_1)(t_{2i+1} -t_2)}} &0
\end{pmatrix}
M_2(t_{2i+1})\\
H^{\tilde{T}}_{2i+2}&=
M_2(t_{2i+2})^{-1}
\begin{pmatrix}
0 & \frac{ \lambda_{i+1}   (a d_i -b c_i t_{2i+2}) }{{2a }} \\
\frac{ \lambda_{i+1}  a (-a d_i +b c_i t_{2i+2})}{{2b^2(a-1)^2 (t_{2i+2} -t_1)(t_{2i+2} -t_2)}} &0
\end{pmatrix}
M_2(t_{2i+2}).
\end{aligned}
\end{equation*}
Let $H^{\tilde{T}}_{2n+1}$ be
the residue matrix at $\tilde{x}= \infty$.
Let $\cA_{jk}^i (\tilde{x})$ be 
the relative rational $1$-forms over $\tilde{T}$ which are the relativization of
the pull-backs of the rational $1$-forms (\ref{2018.5.21.12.20}) under
the composition $\mathbb{P}^1 \times \tilde{T} \ra\mathbb{P}^1 \times T \ra \mathbb{P}^n$.
Since $\lim_{\tilde{x}\ra 0 } \alpha_0^i(\frac{1}{\tilde{x}})=0$, we have
\begin{equation*}
\res_{\tilde{x}=\infty}
\begin{pmatrix}
\cA_{11}^i (\tilde{x}) & \cA_{12}^i (\tilde{x}) \\
-\cA_{21}^i (\tilde{x}) &- \cA_{11}^i (\tilde{x})
\end{pmatrix}=0 \quad (i=1,\ldots,n-2).
\end{equation*}
Then we have
\begin{equation*}
H_{2n+1}^{\tilde{T}}=
\begin{pmatrix}
-1 & a -2 \\
1&a 
\end{pmatrix}
\begin{pmatrix}
\frac{\lambda_0 + \lambda_1}{2} &0 \\
0&-\frac{\lambda_0 + \lambda_1}{2}
\end{pmatrix}
\begin{pmatrix}
-1 & a -2 \\
1&a 
\end{pmatrix}^{-1}.
\end{equation*}

\subsection{Garnier system}\label{2019.7.10.22.15}

Let $\cA(\tilde{x})$ be the Fuchsian system with $2n + 1$ regular singularities 
at $t_1 , \ldots , t_{2n}, \infty$:
\begin{equation*}
\cA(\tilde{x})= d+ 
\sum_{i=1}^{2n} \tilde{H}_{i}\frac{d\tilde{x}}{\tilde{x}-t_i} ,
\end{equation*}
where $\tilde{H}_{i}$ ($i=1, \ldots , 2n$) are 
$2\times 2$ matrices independent of $\tilde{x}$ and 
$t_i \neq t_j$ ($i \neq j$). 
We assume that  $\tilde{H}_{2n+1}:=-\sum_{i=1}^{2n} \tilde{H}_{i}$ is a diagonal matrix and 
the eigenvalues of $\tilde{H}_{i}$ ($i=1, \ldots ,2n+1$) are
as in Table \ref{2018.5.20.22.24}.
\begin{table}[htb]
\caption{The eigenvalues of the residue matrices 
($i=1,\ldots , n-2$).}
  \begin{tabular}{c|ccccccc}\label{2018.5.20.22.24}
  Reside matrices  & $\tilde{H}_{1}$ &
    $\tilde{H}_{2}$ & $\tilde{H}_{2i+1}$ & $\tilde{H}_{2i+2}$
    &  $\tilde{H}_{2n-1}$ &  $\tilde{H}_{2n}$ & $\tilde{H}_{2n+1}$ \\\hline
  Eigenvalues 
     & $\pm \frac{1}{4}$ 
      & $\pm \frac{1}{4}$ 
      & $\pm \frac{\lambda_{i+1}}{2}$ 
      & $\pm \frac{\lambda_{i+1}}{2}$ 
       & $\pm \frac{\lambda_1}{2}$  
    & $\pm \frac{\lambda_0-1}{2}$ 
      &  $\pm\frac{\lambda_0+\lambda_1}{2}$
  \end{tabular}
\end{table}

We fix generators $\gamma_{\tilde{x}}$ 
($\tilde{x}= t_1 \ldots ,t_{2n}, \infty$)
of the fundamental group $\pi_1(\mathbb{P}^1 \setminus \{ t_1,\ldots,t_{2n}, \infty \},*)$.
Here the loop $\gamma_{\tilde{x}}$ on $\mathbb{P}^1$ 
is oriented counter-clockwise, 
$\tilde{x}$ lies inside, while the other singular points lie outside.
Let $\rho'_{\boldsymbol{\lambda}}\colon 
\pi_1(\mathbb{P}^1 \setminus\{ t_1,\ldots,t_{2n}, \infty \},*) \ra \mathrm{SL}_2(\mathbb{C})$
be the representation of the fundamental group defined by Table \ref{2018.5.12.23.13}.
We consider the isomonodromic deformation of the Fuchsian system $\cA(\tilde{x})$
whose preserved monodromy representation is conjugated to
 $\rho'_{\boldsymbol{\lambda}}$. 
Let
$d+ 
\sum_{i=1}^{2n} \tilde{H}^0_{i}\frac{d\tilde{x}}{\tilde{x}-t^0_i} $
be the Fuchsian system with $2n + 1$ regular singularities 
at $t^0_1 , \ldots , t^0_{2n}, \infty$ whose monodromy representation is
conjugated to $\rho'_{\boldsymbol{\lambda}}$.
There exists an open neighbourhood $U_{t^0}\subset \mathbb{C}^{2n}$ 
of the point $t^0=(t^0_1 , \ldots , t^0_{2n})$ 
such that for any $t \in U_{t^0}$, there exists a unique tuple 
$(\tilde{H}_{i}(t))_{i=1,\ldots,2n}$ of analytic matrix valued functions 
such that $\tilde{H}_{i}(t^0) = \tilde{H}^0_{i}$, $i=1,\ldots,2n$,
and the monodromy representation of 
 $d+ 
\sum_{i=1}^{2n} \tilde{H}_{i}(t)\frac{d\tilde{x}}{\tilde{x}-t_i} $
 is
conjugated to $\rho'_{\boldsymbol{\lambda}}$.
The matrices $\tilde{H}_{i}(t)$ $i=1,\ldots,2n$ are 
the solutions of the Cauchy problem 
with the initial data $(\tilde{H}_{i}(t^0))_{i=1,\ldots,2n}$ 
for the Schlesinger equations (see \cite[Theorem 2.7]{Mazz}).

\begin{table}[htb]
\caption{The representation $\rho'_{\boldsymbol{\lambda}}$ of the fundamental group;
here $a_j = \exp(-\pi \sqrt{-1} \lambda_j)$ $j=0,1,\ldots,n-1$.}
  \begin{tabular}{c|c|c|c}\label{2018.5.12.23.13}
     $\tilde{x}=t_{2n-1}$ &  $\tilde{x}=t_{2n}$ & $\tilde{x}=t_1$ &
    $\tilde{x}=t_2$ \\\hline
$ \rho'_{\boldsymbol{\lambda}}(\gamma_{t_{2n-1}}) 
 = \begin{pmatrix} a_1 & 0 \\ 0 & a_1^{-1} \end{pmatrix}$  
&  $\rho'_{\boldsymbol{\lambda}}(\gamma_{t_{2n}}) 
=\begin{pmatrix} -a_0 & 0 \\ 0 & -a_0^{-1} \end{pmatrix}$ 
& $\rho'_{\boldsymbol{\lambda}}(\gamma_{t_1})=\begin{pmatrix} 0 & 1 \\ -1 &0 \end{pmatrix}$ 
& $\rho'_{\boldsymbol{\lambda}}(\gamma_{t_2})
=\begin{pmatrix} 0 &  a_0^{2} \\ - a_0^{-2} & 0 \end{pmatrix}$ 
  \end{tabular}\\
  \begin{tabular}{c|c|c}
     $\tilde{x}=t_{2i+1}$ ($i=1,\ldots,n-2$)
     &  $\tilde{x}=t_{2i+2}$ ($i=1,\ldots,n-2$)
      & $\tilde{x}=\infty$\\\hline
 $\rho'_{\boldsymbol{\lambda}}(\gamma_{t_{2i+1}})=
 \begin{pmatrix} a_{i+1} & 0 \\ 0 & a_{i+1}^{-1} \end{pmatrix}$  
 & $\rho'_{\boldsymbol{\lambda} }(\gamma_{t_{2i+2}}) 
 = \begin{pmatrix} a^{-1}_{i+1} & 0 \\ 0 & a_{i+1} \end{pmatrix}$ 
 & $\rho'_{\boldsymbol{\lambda}}(\gamma_{\infty})=
 \begin{pmatrix} a_0 a_1^{-1} & 0 \\ 0 &a_0^{-1} a_1 \end{pmatrix}$ 
  \end{tabular}
\end{table}

Let $\cA(\tilde{x})$ be the Fuchsian system with $2n + 1$ regular singularities 
at $t_1 , \ldots , t_{2n}, \infty$ as above.
We fix the poles $t_{2n-1}$ and $t_{2n}$ at $0$ and $1$, respectively.
Let $\{\nu_1,\ldots,\nu_{2n-2}\}$ be the roots of the following equation of degree $2n-2$:
\begin{equation}\label{2018.5.12.23.46}
\sum_{k=1}^{2n} \frac{(\tilde{H}_{k})_{12}}{\tilde{x}-t_k} =0.
\end{equation}
For each $\nu_i$, we define $\rho_i$ by
\begin{equation}\label{2018.5.12.23.47}
\begin{aligned}
\rho_i := 
\sum_{k=1}^{2n}  \frac{(\tilde{H}_{k})_{11}+\frac{\theta_{k}}{2}}{\nu_i-t_{k}}.
\end{aligned}
\end{equation}
If a tuple $(\tilde{H}_{i}(t))_{i=1,\ldots,2n}$ is 
a solution of the Schlesinger equations,
then the corresponding functions $\nu_{j}(t_1,\ldots,t_{2n-2})$ and 
$\rho_{j}(t_1,\ldots,t_{2n-2})$ ($j=1,\ldots,2n-2$)
satisfy the Garnier system $\mathcal{G}_{2n-2}$ (see \cite[Theorem 2.1]{Mazz}).

\subsection{Algebraic solution}\label{2019.7.12.11.28}
By the morphism (\ref{2018.5.12.23.51}), we have a generically finite morphism
\begin{equation*}
\Spec \mathbb{C}[\rho_i,\nu_i]_{1\le  i  \le2n-2} \times \tilde{T}\lra 
\Spec \mathbb{C}[\rho_i,\nu_i]_{1\le  i  \le2n-2} \times \Spec \mathbb{C}[t_1,\ldots,t_{2n-2}].
\end{equation*}
We consider the algebraic solution of $\mathcal{G}_{2n-2}$ associated to 
the representation $\rho'_{\boldsymbol{\lambda}}$.

\begin{proof}[Proof of Theorem \ref{2019.7.10.22.13}]
For the residue matrices $H^{\tilde{T}}_{i} $ of 
$(\nabla_{\mathbb{P}^1 \times \tilde{T}/\tilde{T}})_{\boldsymbol{\lambda}}$,
we put
\begin{equation*}
\tilde{H}^{\tilde{T}}_{i}:= 
\begin{pmatrix}
-1 & a -2 \\
1&a 
\end{pmatrix}^{-1}
H^{\tilde{T}}_{i} 
\begin{pmatrix}
-1 & a -2 \\
1&a 
\end{pmatrix}
\end{equation*}
for $i = 1, \ldots, 2n$.
Let $\cA^{\tilde{T}}(\tilde{x})$ be the family of the Fuchsian systems  
with $2n + 1$ regular singularities at $0,1,t_1 , \ldots , t_{2n-2}, \infty$ 
parametrized by $\tilde{T}$
defined by
\begin{equation}\label{2018.5.12.23.49}
\cA^{\tilde{T}}(\tilde{x}):=d+
\tilde{H}^{\tilde{T}}_{2n-1}\frac{d\tilde{x}}{\tilde{x}} + 
\tilde{H}^{\tilde{T}}_{2n}\frac{d\tilde{x}}{\tilde{x}-1} +
\sum_{i=1}^{2n-2} \tilde{H}^{\tilde{T}}_{i}\frac{d\tilde{x}}{\tilde{x}-t_i} .
\end{equation}
Since $\tilde{H}^{\tilde{T}}_{2n+1}:=
-\sum_{i=1}^{2n} \tilde{H}^{\tilde{T}}_{i}$ 
is a diagonal matrix,
we have the assertion (i) of Theorem \ref{2019.7.10.22.13}.

By Proposition \ref{2018.5.8.15.13},
for each $\tilde{t} \in \tilde{T}$, 
the Fuchsian system $\cA^{\tilde{T}}(\tilde{x})$
has the monodromy representation which is conjugated to $\rho'_{\boldsymbol{\lambda}}$,
which is independent of $\tilde{t} \in\tilde{T}$.
That is, the family $\cA^{\tilde{T}}(\tilde{x})$ 
of the Fuchsian systems parametrized by $\tilde{T}$
preserves 
their monodromy representations.
Then we have the assertion (ii) of Theorem \ref{2019.7.10.22.13}.

By (\ref{2018.5.12.23.46}), (\ref{2018.5.12.23.47}), and (\ref{2018.5.12.23.49}),
we have algebraic functions $\nu_i$, $\rho_i$ ($i=1,\ldots,2n-2$) on $\tilde{T}$.
These algebraic functions give the solution of $\mathcal{G}_{2n-2}$ associated to 
the representation $\rho'_{\boldsymbol{\lambda}}$.
Since $\dim \tilde{T} =2n-2$ and 
$\tilde{T} \ra \Spec \mathbb{C}[t_1,t_2,\ldots ,t_{2n-2}]$
is a generically finite morphism, 
we have the assertion (iii) of Theorem \ref{2019.7.10.22.13}.
\end{proof}


\noindent
{\bf Acknowledgments.}
The author thanks Frank Loray for many valuable discussions.
He also thanks Masa-Hiko Saito for warm encouragement.
He is supported by JSPS KAKENHI Grant Numbers 18J00245 
and 19K14506.
He is grateful to the anonymous referee's suggestions which helped to improve the paper.


\end{document}